\newcommand{\bI}{\mathbf{I}}
\newcommand{\Be}{\mathbf{e}}
\newcommand{\Bf}{\mathbf{f}}
\newcommand{\Bh}{\mathbf{h}}
\newcommand{\Bq}{\mathbf{q}}
\newcommand{\Bt}{\mathbf{t}}
\newcommand{\Bx}{\mathbf{x}}
\newcommand{\By}{\mathbf{y}}
\newcommand{\Bz}{\mathbf{z}}
\newcommand{\Beps}{{\boldsymbol \epsilon}}
\newcommand{\Bxi}{{\boldsymbol \xi}}
\newcommand{\Btheta}{{\boldsymbol \theta}}
\newcommand{\cB}{\mathcal{B}}
\newcommand{\cC}{\mathcal{C}}
\newcommand{\cF}{\mathcal{F}}
\newcommand{\cH}{\mathcal{H}}
\newcommand{\cI}{\mathcal{I}}
\newcommand{\cL}{\mathcal{L}}
\newcommand{\cO}{\mathcal{O}}
\newcommand{\cP}{\mathcal{P}}
\newcommand{\cQ}{\mathcal{Q}}
\newcommand{\cS}{\mathcal{S}}
\newcommand{\cT}{\mathcal{T}}
\newcommand{\cU}{\mathcal{U}}
\newcommand{\cV}{\mathcal{V}}
\newcommand{\cW}{\mathcal{W}}
\newcommand{\ue}{\textup{e}}
\newcommand{\uf}{\textup{f}}
\newcommand{\uh}{\textup{h}}
\newcommand{\ux}{\textup{x}}
\newcommand{\ke}{k_{\textup{e}}}
\newcommand{\kf}{k_{\textup{f}}}
\newcommand{\kh}{k_{\textup{h}}}
\newcommand{\kx}{k_{\textup{x}}}
\newcommand{\act}{{\mathbb{A}}} 
\newcommand{\NN}{{\mathbb{N}}}  
\newcommand{\RR}{{\mathbb{R}}}  
\newcommand{\TT}{{\mathbb{T}}}  
\newcommand{\ZZ}{{\mathbb{Z}}}  
\newcommand{\rotxc}[1]{\begin{sideways}#1\end{sideways}}
\newcommand{\rotyc}[1]{\rotxc{\rotxc{\rotxc{#1}}}}
\newcommand{\acts}{ \text{ \rotyc{$\circlearrowright$} } } 
\renewcommand{\act}{Action-Angle\xspace}
\newcommand{\neigh}{neighborhood\xspace}
\newcommand{\SVN}{San V\~u Ng\d{o}c\xspace}
\newcommand{\smooth}{{\cC^\infty}}   
\renewcommand{\flat}{\mathcal{F}\! \ell^{\infty}}
\newcommand{\Diag}{{\operatorname{Diag}}} 
\newcommand{\pois}[2]{{\{#1,#2\}}}  
\newcommand{\hypref}[2]{{\hyperref[#1]{#2~\ref{#1}}}}
\newcommand{\ifwork}[1]{\ifthenelse{\boolean{workmode}}{#1}{}}
\newcommand{\comment}[1]{}
\newcommand{\mute}[1]{}
\newcommand{\printname}[1]{}
    \newtheorem{theorem}{Theorem}[section]
\newtheorem{proposition}[theorem]{Proposition}
\newtheorem{lemma}[theorem]{Lemma}
\theoremstyle{plain}
\newtheorem{definition}[theorem]{Definition}
\newtheorem{remark}[theorem]{Remark}
\newtheorem{assumption}[theorem]{Assumption}
\newtheorem{notations}[theorem]{Notations}
\newcommand{\abs}[1]{\left|#1\right|}
\newcommand{\re}{\mathop{\mathfrak{R}e}\nolimits}
\newcommand{\im}{\mathop{\mathfrak{I}m}\nolimits}
\newcommand{\flechebas}[1]{%
  \settoheight{\unitlength}{\mbox{$#1$}}%
  \settowidth{\Taille}{\mbox{~${\scriptstyle #1}$}}%
  \addtolength{\unitlength}{4ex}%
  \begin{picture}(0,1)
    \put(0,1){\vector(0,-1){1}}
    \put(0,0.5){\makebox(0,0){${\scriptstyle #1}$ \hspace{\the\Taille}}}
  \end{picture}}
\newcommand{\flechehaut}[1]{%
  \settoheight{\unitlength}{\mbox{$#1$}}%
  \settowidth{\Taille}{\mbox{~${\scriptstyle #1}$}}%
  \addtolength{\unitlength}{4ex}%
  \begin{picture}(0,1)
    \put(0,0){\vector(0,1){1}}
    \put(0,0.5){\makebox(0,0){\hspace{\the\Taille}${\scriptstyle #1}$ }}
  \end{picture}}
\newcommand{\flechedroite}[1]{%
  \settowidth{\unitlength}{\mbox{$#1$}}
  \settoheight{\Taille}{\mbox{${\scriptstyle #1}$}}
  \addtolength{\Taille}{1ex}
  \addtolength{\unitlength}{4ex}
  \raisebox{0.5ex}{%
  \begin{picture}(1,0)
    \put(0,0){\vector(1,0){1}}
    \put(0.5,0){\makebox(0,0){${\scriptstyle #1}$ \vspace{\the\Taille}}}
  \end{picture}}}
\newcommand{\flechegauche}[1]{%
  \settowidth{\unitlength}{\mbox{$#1$}}
  \settoheight{\Taille}{\mbox{${\scriptstyle #1}$}}
  \addtolength{\Taille}{1ex}
  \addtolength{\unitlength}{4ex}
  \begin{picture}(1,0)
    \put(1,0){\vector(-1,0){1}}
    \put(0.5,0){\makebox(0,0){${\scriptstyle #1}$ \vspace{\the\Taille}}}
  \end{picture}}
\newcommand{\ouf}{\vspace{8mm}}
\newcommand{\AGS}{Atiyah - Guillemin \& Sternberg\xspace}
\title{Local models of almost-toric integrable systems: theory and applications}
\author{Christophe Wacheux\footnote{Assistant professor at EPFL, christophe.wacheux@epfl.ch}}
\begin{document}

\maketitle

\begin{abstract}
In this article we show how one can use the local models of integrable Hamiltonian systems near critical points to give a description of certain singular loci of integrables semi-toric systems in dimension $\geqslant 4$.  
\end{abstract}

\section{Introduction}
 Given a symplectic manifold $(M^{2n},\omega)$, we define an intergrable Hamiltonian system (or IHS) as a function $F = (f_1,\ldots,f_n):M \to \RR^n$ such that its components commute for the Poisson $\pois{.}{.}$ bracket induced by $\omega$, and the set of points for which $dF$ is of maximal rank is an open dense subset (such points are said \emph{regular} points, and \emph{critical} otherwise; in particular, $p \in M$ is a \emph{fixed} point if $dF(p)= 0$). For the associated foliation $\cF$ of $F$ given by the connected components of its fibers, we set $\pi_\cF : M \to \cB$, and $\cB$ the base space.
 
 The description of IHS is a great and very difficult task, and a classification of IHS would be an important step in that direction. For a subclass of IHS and its associated equivalence relation, a classification describes the structure of the moduli space and a basis of simple and universal objects for the qualitative and quantitative analysis of a given system.
 
 As IHS take their origins in mechanical systems, it is natural to consider classifications that involve, for instance, periodic motions, particular trajectories (e.g. fixed points), and the behaviour of the system near these trajectories. We will consider here a subclass of IHS defined by the periodicity of the flow of the components of $F$, and on the nature of its critical points.
 
 \subsection{Almost-toric, semi-toric and toric systems}
 
 We first remind the theory of non-degenerate critical points for IHS. Given an IHS $(M,\omega,F)$, at a fixed point $p$, the set of the Hessians of the $f_i$'s at $p$ denoted $\cH[f_i]_p$ form a vector space $\langle \cH[F]_p \rangle$. The Poisson bracket $\pois{.}{.}$ induces a Poisson bracket $\pois{.}{.}_p$ on $\langle \cH[F]_p \rangle$, turning it into a Lie subalgebra of $\mathfrak{sp}(2n)$. The fixed point $p$ is called \emph{non-degenerate} if $\langle \cH[F]_p \rangle$ is a Cartan subalgebra of $\mathfrak{sp}(2n)$, that is, if it is abelian and self-centralizing. 
 
 Now, let us assume that $p$ is a general critical point and $\cU_p$ a neighborhood of $p$, one can take the symplectic quotient $(\tilde{\cU}_p = \cU_p /{\! \! \slash} \TT^\kx,\tilde{\omega}_p)$ of $\cU_p$ by the $\kx$-torus action induced by regular components $F_\ux$ of $F$. The restriction $\tilde{F}_p$ of $F$ is an IHS for $(\tilde{\cU}_p = \cU_p /{\! \! \slash} \TT^\kx,\tilde{\omega}_p)$, and the projection $\tilde{p}$ of $p$ on $\tilde{\cU}_p$ is a fixed point for $\tilde{F}_p$. The critical point $p$ is called nondegenerate if $\tilde{p}$ is a nondegenerate fixed point for $\tilde{F}_p$.
 
 The classification of Cartan subalgebra of $\mathfrak{sp}(2n)$ due to Williamson~\cite{Williamson-OnAlgPbLinNF-1936} give the following result concerning the quadratic part of a nondegenerate critical point $p$. Through all the paper, we use the bold notations $\Bx_l = (x_1,\ldots,x_l)$, omitting the $l$ when the context makes it obvious. For instance, we shall note here the Darboux coordinates of $M$ by $(\Bx,\Bxi) = (x_1,\ldots,x_n,\xi_1,\ldots,\xi_n)$.
 
 
 \begin{theorem}[Williamson, 1936] \label{theo:Williamson}
 
 Let $p \in M$ a nondegenerate critical point of $F$ an IHS. Then there exists a quadruplet $\Bbbk=(\ke,\kf,\kh,\kx) \in \NN^4$, an open set $\cU_p \supseteq p$ and a symplectomorphism $\varphi: (\cU_p,\omega,p) \to (\RR^{2n},\omega_0 = \sum_{i=1}^n d\xi_i \wedge dx_i,0)$ such that \[ \varphi^*F = Q_\Bbbk + o(2), \text{ with } Q_\Bbbk = (\Be_{\ke},\Bh_{\kh},\Bf_{\kf},\xi_{n-\kx+1},\ldots,\xi_n) \text{ and}\]
 
 \begin{itemize}
  \item $\ue_i = x^2_i + \xi^2_i$ - elliptic (or $E$) components,
  \item $\uh_i = x_i y_i$ - hyperbolic (or $H$) components,
  \item $f_i =(f^1_i,f^2_i)$, $\begin{cases}
         \uf^1_i = x^1_i \xi^1_i + x^2_i \xi^2_i \\
         \uf^2_i = x^1_i \xi^2_i - x^2_i \xi^1_i
        \end{cases}$- focus-focus (or $FF$) components.
 \end{itemize}
 
\end{theorem}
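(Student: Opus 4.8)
The plan is to peel off the purely linear-algebraic content --- Williamson's classification of Cartan subalgebras of $\mathfrak{sp}(2n,\RR)$ --- and to surround it with two standard reductions: a Darboux normalization of $\omega$, and, when $p$ is not fixed, the symplectic-quotient construction described just above.

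\emph{Step 1: reduction to a fixed point.} Assume $\rk dF(p)=\kx>0$. Composing $F$ with a suitable $A\in\GL(n,\RR)$ we may arrange that the last $\kx$ components $F_\ux=(f_{n-\kx+1},\dots,f_n)$ have independent differentials at $p$, while $df_i(p)=0$ for $i\le n-\kx$. Their Hamiltonian flows commute and are complete (periodic), so the Carath\'eodory--Jacobi--Lie theorem provides Darboux coordinates near $p$ in which $F_\ux=(\xi_{n-\kx+1},\dots,\xi_n)$; since $\pois{f_i}{\xi_j}=0$, each remaining $f_i$ is then independent of $x_{n-\kx+1},\dots,x_n$. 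Marsden--Weinstein reduction by the induced $\TT^\kx$-action yields $(\tilde\cU_p,\tilde\omega_p)\cong(\RR^{2k},\omega_0)$ with $k=n-\kx$, on which $\tilde F_p$ has the fixed point $\tilde p=0$, nondegenerate by hypothesis. It suffices to treat the fixed-point case for $\tilde F_p$, lift the resulting linear symplectomorphism trivially in the $\TT^\kx$-directions, and then remove the quadratic terms of the $f_i$ ($i\le n-\kx$) that involve the transverse momenta by one further symplectomorphism that is trivial on the reduced factor --- possible precisely because those components do not depend on the transverse positions; all of this leaves the order-$\ge 3$ remainder, hence the last block $(\xi_{n-\kx+1},\dots,\xi_n)$, intact.

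\emph{Step 2: the quadratic model at a fixed point.} Now suppose $dF(p)=0$ and, by Darboux, identify $(\cU_p,\omega,p)$ with $(\RR^{2n},\omega_0,0)$. Let $Q_i$ be the quadratic part of $f_i$ at $0$. Comparing order-$2$ terms in $\pois{f_i}{f_j}\equiv 0$ (the lowest-order contribution, since $df_i(0)=df_j(0)=0$) gives $\pois{Q_i}{Q_j}=0$. Under the standard isomorphism between homogeneous quadratic forms on $(\RR^{2n},\omega_0)$ and $\mathfrak{sp}(2n,\RR)$ this says the images $S_i$ of the $Q_i$ commute, so $\langle\cH[F]_0\rangle=\Span(S_1,\dots,S_n)$ is an abelian subalgebra; being self-centralizing by nondegeneracy, it is a Cartan subalgebra, in particular $n$-dimensional.

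\emph{Step 3: Williamson normal form.} By Williamson's theorem~\cite{Williamson-OnAlgPbLinNF-1936}, every Cartan subalgebra of $\mathfrak{sp}(2n,\RR)$ is $\Sp(2n,\RR)$-conjugate to a block-diagonal one: a direct sum of $\ke$ elliptic lines $\langle x_i^2+\xi_i^2\rangle$, $\kh$ hyperbolic lines $\langle x_i\xi_i\rangle$ and $\kf$ focus--focus planes $\langle \uf^1_i,\uf^2_i\rangle$, with $\ke+\kh+2\kf=n$. A conjugating $L\in\Sp(2n,\RR)$ carries a suitable basis of the $Q_i$ exactly onto $Q_\Bbbk$; composing $\varphi$ with $L$ (and the components of $F$ with the corresponding element of $\GL(n,\RR)$), and combining with Step 1, yields $\varphi^*F=Q_\Bbbk+o(2)$. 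The main obstacle is this step: the entire substance sits in Williamson's classification, whose proof amounts to decomposing $\RR^{2n}$ into the symplectically paired generalized eigenspaces of a generic element of the subalgebra --- a real pair $\pm\lambda$ giving a hyperbolic block, an imaginary pair $\pm i\mu$ an elliptic block, a quadruple $\pm\lambda\pm i\mu$ a focus--focus block, and eigenvalue $0$ excluded by the self-centralizing condition at the fixed point --- and then verifying that every element of the subalgebra preserves this decomposition, which is what permits simultaneous normalization. The surrounding ingredients (Darboux, Carath\'eodory--Jacobi--Lie, Marsden--Weinstein, and the Taylor-order bookkeeping) are routine.
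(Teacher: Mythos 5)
The paper offers no proof of this statement: it is quoted as Williamson's classical result, preceded by a sketch of the rank reduction (symplectic quotient by the regular components, Hessians at the reduced fixed point spanning a Cartan subalgebra of $\mathfrak{sp}(2n)$). Your three-step architecture is exactly that intended route, and Steps 2 and 3 (commutation of the quadratic parts, identification with a Cartan subalgebra, Williamson's linear classification via the generalized eigenspaces of a regular element) are sound.

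There is, however, a concrete gap in Step 1, at the point where you claim to ``remove the quadratic terms of the $f_i$ ($i\le n-\kx$) that involve the transverse momenta by one further symplectomorphism.'' The mixed terms $\xi_j\cdot\ell(\Bx',\Bxi')$ can indeed be killed (by conjugating with the flow of $\xi_j h(\Bx',\Bxi')$ and using invertibility of $\ad{\tilde Q_i}$ on the relevant linear forms --- note such a map is \emph{not} trivial on the reduced factor), but the terms purely quadratic in the transverse momenta cannot be removed at all. Since $df_i(p)=0$, the quadratic part of $f_i\circ\varphi$ at $p$ equals $Q_i\circ d\varphi(p)$ for \emph{any} diffeomorphism $\varphi$ fixing $p$, so the rank of the Hessian of $f_i$ is invariant; e.g.\ for $F=(x_1\xi_1+\xi_2^2,\ \xi_2)$ on $\RR^4$ (a nondegenerate rank-one critical point of type $\Bbbk=(0,0,1,1)$) no symplectomorphism yields $\varphi^*f_1=x_1\xi_1+o(2)$, because $x_1\xi_1+\xi_2^2$ has rank $3$ and $x_1\xi_1$ has rank $2$. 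This shows the step you assert is impossible in general; it also shows the theorem as literally stated needs one of the standard repairs --- restrict to fixed points, compose with a local diffeomorphism $G$ of the target $\RR^n$ as in the Eliasson normal form (Theorem~\ref{theo:Eliasson_NF-ST_case}), or assert only that the Hessians modulo the ideal generated by the transverse momenta are $\Sp(2n-2\kx,\RR)$-conjugate to the standard Cartan. A complete proof should either prove one of these corrected forms or explicitly subtract a function of the regular components before normalizing; your argument does neither. (Minor: for a general IHS the flows of the regular components need not be complete or periodic, but Carath\'eodory--Jacobi--Lie is purely local, so this parenthetical is unnecessary rather than harmful.)
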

 
 The quadruplet $\Bbbk$ is called the Williamson type of $p$ and it is a symplectic invariant. Note that Theorem~\ref{theo:Williamson} can also apply to regular points with $\Bbbk = (0,0,0,n)$. The four coefficients are linked by the equation

\begin{equation} \label{equ:Williamson_type}
 \ke + 2\kf + \kh + \kx = n
\end{equation}

\begin{definition}
We define $\cW(F)$ as the set of different Williamson types that occurs for a given IHS $F$. When equipped with the following relation \[ \Bbbk \preccurlyeq \Bbbk' \text{ if: } \ke \geqslant \ke', \kf \geqslant \kf' \text{ and } \kh \geqslant \kh' ,\] it is a (partially) ordered set (the term \emph{poset} also appears in the litterature). 

\end{definition}

Let us show that $(\cW(F),\preccurlyeq)$ is an ordered set. Let $\Bbbk, \Bbbk', \Bbbk'' \in \cW(F)$.

\begin{itemize}
 \item {\bf reflexivity:} we always have $\ke \geqslant \ke, \kf \geqslant \kf$, and $\kh \geqslant \kh$, thus $\Bbbk \preccurlyeq \Bbbk $,
 \item {\bf antisymetry:} if $\Bbbk \preccurlyeq \Bbbk'$ and $\Bbbk' \preccurlyeq \Bbbk$, then $ \ke \geqslant \ke'$, $\kf \geqslant \kf'$, $\kh \geqslant \kh'$ and $ \ke \leqslant \ke'$, $\kf \leqslant \kf'$, $\kh \leqslant \kh'$, so $ \ke = \ke'$, $\kf = \kf'$, $\kh = \kh'$ and hence, by equation~\ref{equ:Williamson_type} we have $\kx = \kx'$, so $\Bbbk = \Bbbk'$,
 \item {\bf transitivity:} if $\Bbbk \preccurlyeq \Bbbk'$ and $\Bbbk' \preccurlyeq \Bbbk''$ then $ \ke \geqslant \ke' \geqslant \ke'', \kf \geqslant \kf' \geqslant \kf'', \kh \geqslant \kh' \geqslant \kh''$, hence $\Bbbk \preccurlyeq \Bbbk''$.
 \end{itemize}

 We introduce the following notations
 
\begin{notations}
 \begin{itemize} 
 \item $P_\Bbbk (\cU) - $ The locus of critical points of Williamson type $\Bbbk$ on the open set $\cU \subseteq M$.
 \item $L_\Bbbk (\cU) - $ The locus of critical leaves $\{ \Lambda_b | b \in \cU \}$ of Williamson type $\Bbbk$, with $\cU$ an open set of $M$.
 \item $V_\Bbbk (\cU):= F(P_\Bbbk (\cU)) - $ The locus of critical values of Williamson type $\Bbbk$ on the open set $\cU \subseteq M$.
  \end{itemize}
\end{notations}

 
 We can now define the almost-toric systems. We introduce a criterium called \emph{complexity}. This notion find its origins in the works of Karshon and Tolman~\cite{KarshonTolman-Centeredcomplexity1HamTorusAction-2001}\cite{KarshonTolman-CompleteinvariantsforHamT_actions_tall-2003}\cite{KarshonTolman-ClassificationofHamiltonian-2011}, Symington and Leung~\cite{Symington-4from2-2001}, \cite{LeungSymington-Almosttoricsymplectic-2010}, and of \SVN in~\cite{SVN-Momentpolytopessymplectic-2007}.

\begin{definition}
 Let $F=(f_1,\ldots,f_n):(M^{2n},\omega) \to \RR^n$ be an integrable Hamiltonian system. It is said to be almost-toric of complexity $c \leqslant n$ if it verifies these conditions:
 
 \begin{itemize}
  \item all critical points are non-degenerate,
  \item there are no singularities of hyperbolic type: $k_\uh = 0$,
  \item the flow generated by each of the last ($n-c$) components of $F$ function is $2\pi$-periodic so that $\check{F}^{c}:=(f_{c+1},\ldots,f_n)$ generates a Hamiltonian $\TT^{n-c}$-action.
  \end{itemize}

 If $c=0$, the system is called \emph{toric}, and \emph{semi-toric} if $c=1$. For semi-toric systems, we set $f_1$ as the function that may fail to yield an $S^1$-action, and define $\check{F}:=(f_2,\ldots,f_n)$ as the function that generates the $\TT^{n-1}$-action.
 
\end{definition} 
 
 For toric systems, a very simple and powerful classification has been achieved. For these systems, we have the two following results
 
 \begin{theorem}
 \begin{itemize}
  \item {\bf \AGS theorem:} If $(M,\omega, F)$ is equipped with a Hamiltonian $\TT^r$-action, then the fibers of its associated moment map are connected and its image is a rational convex polytope of dimension $r$ (\cite{Atiyah-ConvexityandCommuting-1982}, \cite{GuilleminSternberg-ConvexitypropertiesI-1982}, \cite{GuilleminSternberg-ConvexitypropertiesII-1984}).
  
  \item {\bf Delzant's classification theorem:} In the integrable case $r=n$, if the action is effective, $F(M)$ characterizes the IHS up to a symplectomorphism equivariant with respect to the $\TT^n$ action generated by $F$ (\cite{Delzant-Hamiltoniensperiodiqueset-1988}, \cite{Delzant-ClassificationActions-1990}).
  
 \end{itemize}
  
 \end{theorem}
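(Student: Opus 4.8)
The plan is to handle the two parts by different routes: the first is essentially topological (Morse--Bott theory on components of the moment map), the second proceeds by explicit construction of a canonical model.

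\emph{Convexity and connectedness (\AGS).} I would argue in three steps. \emph{Step 1: connected fibers.} For a generic vector $X \in \RR^r$, the component $\mu^X := \langle F, X\rangle$ of the moment map is a Morse--Bott function whose critical set is the fixed-point set of the subtorus $\overline{\{\exp(tX)\}}$; by the equivariant local normal form at a fixed point (the all-elliptic, $\Bbbk=(n,0,0,0)$, case of Theorem~\ref{theo:Williamson} in its equivariant refinement, i.e.\ the model $\CC^n$ with moment map $(|z_1|^2,\dots,|z_n|^2)$), each critical submanifold has \emph{even} index and \emph{even} coindex. The standard Morse-theoretic lemma that a proper function, bounded below, with no critical submanifold of index or coindex $1$ has connected sublevel sets and connected regular level sets then gives that the fibers of $\mu^X$ are connected, and letting $X$ range over a sequence of generic directions converging to a prescribed one yields connectedness of the fibers of $F$ itself. \emph{Step 2: convexity.} The image $F(M)$ is connected; by the same local normal form, near any value $c_0 = F(p)$ the map $F$ sends a saturated neighborhood of $F^{-1}(c_0)$ onto a neighborhood of $c_0$ in a polyhedral cone with apex $c_0$ whose edges are spanned by integer vectors (the weights of the isotropy representation on $T_pM$). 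A connected subset of $\RR^r$ that is locally convex at every point is convex; compactness of $M$ then makes $F(M)$ a compact convex set with finitely many faces, hence a polytope, and the integrality of the edge directions makes it rational. \emph{Step 3:} its dimension equals $r$ exactly when the action is effective.

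\emph{Classification (Delzant).} Here the strategy is to show that $\Delta := F(M)$ determines $(M,\omega,F)$ up to equivariant symplectomorphism by (a) attaching to $\Delta$ a canonical toric manifold $M_\Delta$ and (b) proving that every effective toric system with moment image $\Delta$ is equivariantly symplectomorphic to $M_\Delta$. For (a), write $\Delta = \{ x \in \RR^n : \langle x, u_i\rangle \ge c_i,\ i=1,\dots,d\}$ with $u_i$ primitive integral inward normals; the linear surjection $\RR^d \to \RR^n$, $e_i \mapsto u_i$, descends to a surjection $\TT^d \to \TT^n$ with kernel a subtorus $K \subset \TT^d$, and I would set $M_\Delta$ to be the symplectic reduction of $(\CC^d,\omega_{\mathrm{std}})$ by $K$ at the level prescribed by the $c_i$, with residual $\TT^n = \TT^d/K$ action. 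The Delzant smoothness/primitivity hypothesis on $\Delta$ — which the toric hypothesis forces on $F(M)$, since at each vertex exactly $n$ facets meet and their normals form a lattice basis by the $\CC^n$ normal form — guarantees that this reduction is a smooth compact symplectic manifold with moment image precisely $\Delta$. For (b), given an arbitrary effective toric $(M,\omega,F)$ with image $\Delta$, I would build an equivariant symplectomorphism to $M_\Delta$ locally: first over neighborhoods of the preimages of the vertices, where both sides are the standard $\CC^n$ model, then over preimages of the higher-dimensional faces, and finally glue the local pieces into a global equivariant symplectomorphism by a Moser deformation argument on the space of $\TT^n$-invariant symplectic forms sharing the moment map $F$, which is contractible.

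\emph{Main obstacle.} The delicate point in the \AGS part is the Morse--Bott connectedness lemma, whose hypothesis (no critical submanifold of index or coindex $1$) is exactly what the symplectic nature of the torus action supplies through the $\CC^n$ normal form. In Delzant's theorem the hard step is the global gluing: the local equivariant symplectomorphisms are unique only up to an invariant Hamiltonian — a torsor modeled on closed basic one-forms over $\Delta$ — and assembling them requires a Moser argument that simultaneously preserves the torus action and the moment-map constraint; this, rather than either local model, is where the real work lies.
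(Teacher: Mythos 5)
The paper does not actually prove this statement: it is quoted as a pair of classical results with bare citations to Atiyah, Guillemin--Sternberg and Delzant, so there is no in-paper argument to compare yours against. Your sketch follows the standard proofs from those references — Morse--Bott theory applied to the components $\mu^X$ for connectedness and convexity, and the reduction model $\CC^d/\!/K$ together with local normal forms and a gluing argument for Delzant's classification — so the overall architecture is the right one.

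One step as written would not survive scrutiny. In Step 1 you pass from connectedness of the level sets of a single component $\mu^X$ to connectedness of the fibers of the full map $F$ by ``letting $X$ range over a sequence of generic directions converging to a prescribed one''. A fiber of $F$ is an intersection $\bigcap_X (\mu^X)^{-1}(\langle c,X\rangle)$ of connected sets, and neither intersections nor limits of connected sets are connected in general, so nothing follows from the one-component case by a limiting argument. The actual argument (Atiyah's Lemma 2.2) is an induction on the number $k$ of components: one assumes the fibers of $(f_1,\dots,f_{k-1})$ are connected and deduces the statement for $k$ components, and that induction step is where the real work of this half of the theorem lies. Note also that once connectedness of the fibers of all sub-moment-maps is established, convexity drops out immediately: for any affine line $\ell\subset\RR^r$, the preimage $F^{-1}(\ell)$ is a fiber of the moment map of a corank-one subtorus, hence connected, so $F(M)\cap\ell$ is connected; this replaces your Step 2 and avoids the Tietze--Nakajima local-to-global convexity principle (which, if you do invoke it, requires the set to be closed as well as connected and locally convex). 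The Delzant half of your sketch matches the standard proof, and you correctly locate the delicate point in the gluing, which is controlled by the vanishing of a first cohomology over the contractible polytope.
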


 There are several possible directions for a classification ``\`a la Delzant'' that extends the toric case: replace $\TT^n$ by any (non-abelian) compact Lie group, have $F$ non-necessarily proper etc. The almost-toric extension is of a different nature. In toric systems, there are $n$ \emph{globally} Hamiltonian $S^1$-actions, and (hence) only elliptic critical points occurs. In almost-toric systems, we require fewer $S^1$-actions, and we authorize critical points with elliptic and focus-focus components. Hence, to our consideration, semi-toric systems are the closest from toric systems. The ransom of this generalization is the loss of the rigidity on which we relied for toric systems: almost-toric systems cover more physical situations (see~\cite{PelayoRatiuSVN-SymplecticBifTheoryForIntegrableSystems-2011} and references therein), but the image is not a moment polytope anymore, and Delzant theorem do not apply anymore.
 
 Nevertheless, the image of the moment map still contains a lot of information. In~\cite{SVN-semiglobalinvariants-2003}, \cite{PelayoSVN-Semitoricintegrablesystems-2009} and~\cite{PelayoSVN-ConstructingIntSysOfSemitoricType-2011}, Pelayo and \SVN retrieved a classification ``{\`a} la Delzant'' for semi-toric systems of dimension $2n=4$. This classification requires a description of the image of the moment map and of the $V_\Bbbk(M)$. The aim of this article is to provide results for this description in any dimension, when $\kf = 1$.

\subsection{Localization of semi-toric critical values} \label{subsection:Loc_S-T_crit_values}

This subsection is devoted to the formulation and explanation of our main result. Let $\vec{e}_1 $ be the first vector of the basis induced by $F$ - $\vec{e}_1 $ is the direction of $f_1$.

\begin{theorem} \label{theo:loc_FF_values}
 Let $F$ be a semi-toric integrable system on a compact symplectic manifold $M^{2n}$, $\Bbbk \in \cW(F)$ with $\kf = 1$. Then $V_\Bbbk(M)$ is a finite union of connected embedded submanifolds $\Gamma_i$ of dimension $\kx$ called nodal submanifolds such that:
\begin{enumerate}
 
 \item For each $\Gamma_i$, there exists an affine plane $\cP(\Gamma_i) \subseteq \RR^n$ of the form $\cP(\Gamma_i) = P + \RR \cdot \vec{e}_1 + \RR \cdot \vec{v}_1 + \ldots + \RR \cdot \vec{v}_{\kx}$ with $P \in F(M)$ and $\vec{v}_1,\ldots,\vec{v}_{\kx}$ a free family with integer coefficients, such that $\Gamma_i \subseteq \cP(\Gamma_i) \cap F(M)$.
 
 \item In $\cP(\Gamma_i)$, the nodal surface is the graph of a smooth function $h$ from an open affine domain $D \subseteq \RR^{\kx}$ to $\RR$:
 
 \[ \Gamma_i = \{ P + h({\bf t}) \cdot \vec{e}_1 + t_1 \cdot \vec{v}_1 + \cdots + t_{\kx} \cdot \vec{v}_{\kx}\ ,\  {\bf t} \in D \} .\]

  \item If we assume that the fibers are connected, then the nodal surfaces are isolated: there exists a open neighborhood $\cV_i$ of $\Gamma_i$ such that $V_\Bbbk(\cV_i) = \Gamma_i$.

\end{enumerate}
\end{theorem}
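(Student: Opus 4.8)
The plan is to work locally near a connected component $\Gamma_i$ of the focus-focus critical value set and build the affine plane $\cP(\Gamma_i)$ and the graph function $h$ from the normal form of Theorem~\ref{theo:Williamson}. Since $\kf=1$, every critical point $p\in P_\Bbbk(M)$ has Williamson type $\Bbbk=(\ke,1,0,\kx)$ with $\ke+2+\kx=n$, so near $p$ there are Darboux coordinates in which $F$ is, up to higher order, two focus-focus components $f^1,f^2$, a block of $\ke$ elliptic components and a block of $\kx$ regular (action) components. The first step is to upgrade this formal normal form to a genuine local model for the \emph{value set}: the $\kx$ regular components, together with $f_1$ (which is in the direction $\vec e_1$), generate a local $\TT^{\kx}\times\RR$ symmetry, and after a symplectic reduction by the $\TT^{\kx}$-action one is left with a $4$-dimensional focus-focus singularity with parameters. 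In dimension $4$ the image of a focus-focus point is a single point of $F(M)$ that is isolated in the set of critical values of its Williamson type (this is the classical local picture of a focus-focus fiber), and it depends smoothly on the $\kx$ reduced parameters and lies on the line $\RR\cdot\vec e_1$ modulo those parameters. Unwinding the reduction gives exactly the claimed affine plane: $P$ is the image of a reference critical point, the integer vectors $\vec v_1,\dots,\vec v_{\kx}$ are the weights of the local $\TT^{\kx}$-action (they are integral because the action is by a torus, hence its moment map directions are rational), and the focus-focus value moves only in the $\vec e_1$-direction as we vary the reduced parameters. This proves (1) locally, and the local graph property proves (2) locally.

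Next I would globalize. Compactness of $M$ gives that $P_\Bbbk(M)$ is compact; it is a submanifold because near each of its points the normal form shows it is cut out transversally by the non-regular equations, so $P_\Bbbk(M)$ has finitely many connected components, and each $\Gamma_i:=V_\Bbbk$ of a component is a compact connected subset of $F(M)$. To see $\Gamma_i$ is an embedded $\kx$-manifold and lies in a single affine plane, I would use the local models to cover $\Gamma_i$ by charts of the above form and check the transition data are consistent: the direction $\vec e_1$ is globally defined (it is the direction of $f_1$), and the span $\RR\cdot\vec v_1+\dots+\RR\cdot\vec v_{\kx}$ is the annihilator of $d\check F$ restricted to the critical locus, hence locally constant along the connected $\Gamma_i$; so all the local planes glue into one affine plane $\cP(\Gamma_i)$. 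On $\cP(\Gamma_i)$ the local graphs over $D\subseteq\RR^{\kx}$ patch into one smooth function $h:D\to\RR$ because the $\vec e_1$-coordinate of a point of $\Gamma_i$ is a well-defined smooth function of its $(\vec v_1,\dots,\vec v_{\kx})$-coordinates (single-valuedness uses that $f_1$ is a well-defined function on $M$, while smoothness is inherited from each chart). The integrality of $\vec v_1,\dots,\vec v_{\kx}$ is preserved under the $\GL(\kx,\ZZ)$ changes of the torus basis, so one can fix an integral free family once and for all on $\cP(\Gamma_i)$.

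For (3), assume the fibers of $F$ are connected. Then $\pi_\cF$ factors through $F$ and the critical value $\Gamma_i$ determines the critical leaves over it; I would argue that the local normal form, which says the focus-focus value is isolated among critical values of type $\Bbbk$ in dimension $4$, together with connectedness of fibers, propagates to a uniform neighborhood: choose $\cV_i$ a tubular neighborhood of $\Gamma_i$ in $F(M)$ small enough to be covered by finitely many local-model charts; in each chart the only type-$\Bbbk$ critical values are those on the local piece of $\Gamma_i$, and connected fibers rule out any "extra" type-$\Bbbk$ leaf mapping into $\cV_i\setminus\Gamma_i$ from far away in $M$, because such a leaf would have to be joined to $\Gamma_i$ inside a connected fiber component, forcing it onto $\Gamma_i$. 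Hence $V_\Bbbk(\cV_i)=\Gamma_i$.

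The main obstacle I expect is the passage from the \emph{pointwise} Williamson normal form to a normal form with \emph{smooth parameters} valid on a whole neighborhood of $\Gamma_i$, i.e. a parametrized focus-focus normal form that is uniform along the $\kx$-dimensional critical manifold and compatible with the global $\TT^{\kx}$-action; this is where one must invoke (or reprove) an equivariant/parametrized version of the symplectic local normal form, and control that the reduction by the torus action is clean so that the reduced $4$-dimensional family is itself an honest integrable system smoothly depending on parameters. Everything else — integrality of the weights, gluing of planes, the graph property, and the isolation statement under connectedness — is then a matter of patching the local pictures, using compactness of $M$ for finiteness.
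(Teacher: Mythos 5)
You have the right overall strategy --- local models along the critical locus, integrality coming from the torus action, gluing, compactness --- but your proposal defers exactly the two load-bearing steps to ``obstacles I expect'', and those steps are the actual content of the paper's proof. First, you start from Theorem~\ref{theo:Williamson}, which only gives $\varphi^*F=Q_\Bbbk+o(2)$; what is needed is the smooth (Eliasson--Miranda--Zung) normal form $F\circ\varphi^{-1}=G\circ Q_\Bbbk$ on an orbit-saturated neighborhood (Theorems~\ref{theo:Eliasson_NF-ST_case} and~\ref{theo:M-Z}), \emph{plus} the semi-toric refinement of Theorem~\ref{theo:pres_semi-toric_G}: $\check F=A\cdot\check Q_\Bbbk+\check F(c)$ with $A\in GL_{n-1}(\ZZ)$. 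Without that refinement, $G$ is an arbitrary local diffeomorphism and $V_\Bbbk(\cU)$ is merely a $\kx$-dimensional submanifold --- nothing forces it into an affine plane $P+\RR\vec e_1+\sum_j\RR\vec v_j$ with integral $\vec v_j$. The point you wave at (``the weights are integral because the action is by a torus'') is made precise by computing the Hamiltonian flow of $f_i\circ\varphi^{-1}$ in the model coordinates: $2\pi$-periodicity forces the entries of $\partial\check G/\partial\check Q_\Bbbk$ to be integers, hence locally constant, hence $\check F$ is integral-affine in $\check Q_\Bbbk$ and the critical value moves only along $\vec e_1$ as the actions vary. This computation is not optional; it is the theorem.

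Second, your gluing argument is too weak. To conclude that the local planes coincide ($\cP(\Gamma_i)=\cP(\Gamma_j)$ on overlaps) and that the local graph functions $h_i$ patch into a single-valued $h$ on $D$, the paper needs to control the transition data between two local models, i.e.\ the diffeomorphism $B=G^{-1}\circ G'$ associated to $\psi=\varphi^{-1}\circ\varphi'$. Theorems~\ref{theo:pres_trans_B} and~\ref{theo:sympl_inv_semi-toric} show $B$ is, up to admissible symplectomorphisms, the identity up to signs, a $GL_{\kx}(\ZZ)$ change of action basis, and a function flat on the critical values --- and in particular acts trivially on $P_\Bbbk(\cU_i\cap\cU_j)$. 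Your substitute (``single-valuedness uses that $f_1$ is a well-defined function on $M$'') does not rule out the projection of a connected component of $V_\Bbbk(M)$ onto the $(\vec v_1,\dots,\vec v_{\kx})$-coordinates failing to be injective, nor does it show that overlapping charts produce the same affine plane rather than two planes sharing only the direction $\vec e_1$. Your treatment of item (3) and the finiteness/compactness bookkeeping are essentially the paper's. So the proposal is a correct outline of the same route, but with the two theorems that constitute the proof left as acknowledged gaps rather than proved or invoked.
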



\begin{figure}
 \centering
  \includegraphics[scale=0.6]{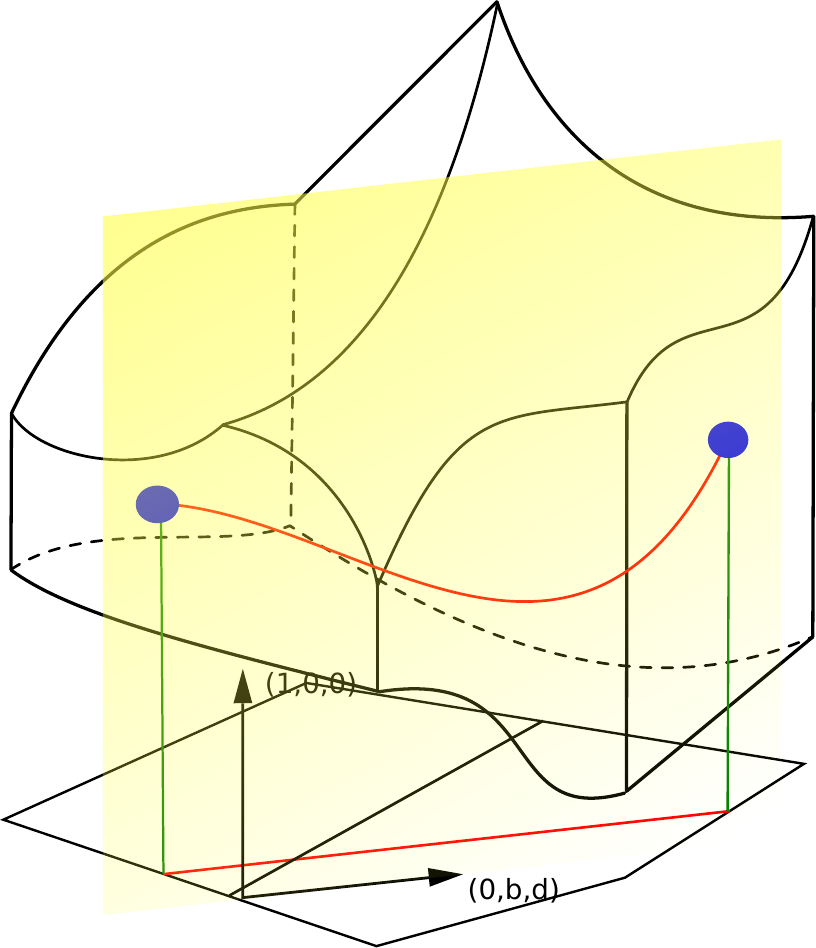}
 \caption{A nodal path of $FF-X$ critical values}
\end{figure}

In particular, this theorem answers negatively to a question asked to me by Colin de Verdi\`ere in 2010: ``Can we have a ``loop'' of focus-focus-transverse singular values in dimension $2n=6$ ?''. We must thank him deeply for this simple question that acted both as a compass and as an incentive in my research during the years 2010-2011. We developped the techniques of local models in particular to answer to this question.


In the theorem above, note that we speak of the image of critical points, and not of critical value, as the Williamson type of a fiber may not be well defined. As a result, a value can belong to different $V_{\Bbbk}(M)$'s. Yet, we chose to give a result describing the image of the moment map rather than the base space of the foliation, because our ``local model'' results describe the former. Description of the latter requires the introduction of new structures and a study on its own. This is what we actually work on in a paper with N.T. Zung that is to be published in the following months, hopefully. Another reason is that the image of the moment map is the space that physicists directly have access to by experimentation, especially when dealing with their quantum countepart.

Organization of this paper is the following: first we set up background and remind existing results concerning the existence of normal forms for the foliation, and thus for ``local models'' for the image of moment map. Then we give precisions for the image of a semi-toric critical point, and we finish with the proof of the global result Theorem~\ref{theo:loc_FF_values} using the results about local models proved in section~\ref{section:loc_mod_ST_val}.
 
\section{Normal forms for points, orbits and leaves}

In this section, we remind the existing results of normal forms near critical points. By normal form theorem, we mean a decomposition of a function into ``simple'' elements with respect to a fixed equivalence relation. We start with Eliasson normal form and its orbital extension with Miranda-Zung's theorem. We finish with Zung's leaf-wise theorem, the ``Arnold-Liouville theorem with singularities''.

In this paper, we will use the following notations

\begin{notations}
 We set the convention, given row vectors $\Bx$ and $\By$ of the same size and $z$ a single coordinate:
\begin{equation} \label{equ:wedge_bold}
d\Bx \wedge d\By = \sum_{j=1}^r dx_j \wedge dy_j \hspace{5mm} \text{ and } \hspace{5mm} d\Bx \wedge dz = \sum_{j=1}^r dx_j \wedge dz
\end{equation}

Lastly, for $A,B \in M_{p,q} (\RR)$, $A \bullet B = (a_{ij} b_{ij})_{i,j=1..n}$. 
 
\end{notations}

\subsection{Eliasson normal form}

In 1984, Eliasson proved the following theorems although he only published the first one. We state the theorem and give to the reader the necessary references for a full proof: 

\begin{theorem}[Eliasson Normal Form - Semi-toric case] \label{theo:Eliasson_NF-ST_case}
 Let $(M^{2n},\omega,F)$ a semi-toric integrable system with $n \geqslant 2$, and $m$ a critical point of Williamson type $\Bbbk = FF - E^{k_\ue} - X^{\kx}$.
 
 Then there exists a triplet $(\cU_m,\varphi,G_\Bbbk)$ with $\cU_m$ an open neighborhood of $m$ in $M$, $\varphi_{\Bbbk}$ a symplectomorphism of $\cU_m$ to a neighborhood of $(0 \in \RR^{2n} , \omega_0)$ and $G_{\Bbbk}$ a local diffeomorphism of $0 \in \RR^n$ to itself such that:
 
 \[ \varphi^*_{\Bbbk} F = G_{\Bbbk} \circ Q_{\Bbbk} \]
  
\end{theorem}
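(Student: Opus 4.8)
The plan is to treat this as the special case of Eliasson's normal-form theorem attached to the Williamson type $\Bbbk = FF - E^{\ke} - X^{\kx}$ (one focus--focus block, $\ke$ elliptic blocks, $\kx$ regular directions, and crucially no hyperbolic block), so the work is to assemble known ingredients rather than build everything from scratch. I would proceed in three stages: linearize the quadratic part, split the germ of $F$ at $m$ into a product of elementary blocks, and normalize each block separately, packaging all reparametrizations of the target into the single diffeomorphism $G_\Bbbk$. First, Theorem~\ref{theo:Williamson} supplies a symplectomorphism $\varphi_0$ from a neighbourhood of $m$ onto $(\RR^{2n},\omega_0,0)$ with $\varphi_0^* F = Q_\Bbbk + o(2)$; composing with a linear symplectomorphism of $(\RR^{2n},\omega_0)$ we may also arrange the block splitting $\RR^{2n} = \RR^4_{FF} \times (\RR^2_E)^{\ke} \times \RR^{2\kx}_X$ carried by $Q_\Bbbk$ to be the standard coordinate splitting. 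What remains is to absorb the remainder $o(2)$ by a symplectomorphism of the source followed by a local diffeomorphism of the target.

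The second stage is the smooth splitting of the germ along this product decomposition. For the $\kx$ regular directions it is essentially automatic: the last $\kx$ components of $F$ are regular at $m$ and, in the semi-toric setting, are among the generators of the ambient $\TT^{n-1}$-action, so by the action--angle theorem --- equivariant here, i.e.\ a local Marle--Guillemin--Sternberg reduction --- they may be straightened to the linear coordinates $\xi_{n-\kx+1},\dots,\xi_n$ exactly, up to a diffeomorphism of the target reparametrizing the action. Symplectic reduction by this $\TT^\kx$-action then leaves a singularity of type $FF - E^{\ke}$ on a symplectic manifold of dimension $2(n-\kx)$. The remaining splitting into the focus--focus factor $\RR^4$ and the $\ke$ elliptic factors $\RR^2$ is the true local normal-form content: one first produces a formal normalization (solvable because $\langle\cH[F]_m\rangle$ is a Cartan subalgebra of $\mathfrak{sp}(2n)$, which controls the relevant cohomological equations), then promotes it to the $C^\infty$ category by a Borel summation and a Moser deformation argument.

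In the third stage each elementary block is handled by an existing theorem: for the elliptic factors $\RR^2$ the classical two-dimensional result (R\"ussmann, \cdv--Vey, Eliasson) writes the component as $G_i\circ(x_i^2+\xi_i^2)$; for the focus--focus factor $\RR^4$ the smooth four-dimensional normal form of \SVN and the author writes the pair of components as $(\uf^1,\uf^2)$ up to a local diffeomorphism of $\RR^2$. Gathering the block-wise target diffeomorphisms together with the action reparametrization from the second stage into one $G_\Bbbk\in\Diff(\RR^n,0)$, and composing all the source symplectomorphisms into a single $\varphi_\Bbbk$, yields $\varphi_\Bbbk^* F = G_\Bbbk\circ Q_\Bbbk$.

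I expect the main obstacle to be the \emph{smoothness} of the splitting in the second stage, and specifically for the focus--focus block: its formal and analytic versions are comparatively soft, but the $C^\infty$ statement requires controlling a flat remainder and choosing the deformation so that it stays compatible with the $S^1$-action underlying the focus--focus singularity --- precisely the content of the four-dimensional focus--focus paper, and the reason Eliasson's 1984 statement took a long time to be completely documented. The hyperbolic-free hypothesis $\kh = 0$ is what lets us sidestep the analogous, more delicate hyperbolic normal form; combined with the elementary elliptic and regular cases it delivers the statement as claimed.
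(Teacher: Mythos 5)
The paper does not actually prove this statement: it is quoted as a known result, with the proof deferred entirely to the cited literature (Eliasson's thesis, Chaperon, Zung, and \SVN--Wacheux for the smooth focus-focus case). Your outline --- Williamson linearization, splitting off the regular directions, block-wise normalization with the $2$-dimensional elliptic and $4$-dimensional focus-focus theorems, and packaging the target reparametrizations into $G_\Bbbk$ --- is a faithful reconstruction of the standard argument contained in exactly those references, and correctly identifies the smooth splitting of the focus-focus block as the genuinely hard step, so it is consistent with (and more informative than) what the paper itself provides.
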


It was the contribution of many people that allowed eventually the statement of the theorems above. The first works to be cited here are those of Birkhoff, Vey~\cite{Vey-SurCertainsSystemes-1978}, Colin de Verdière and Vey~\cite{ColinVey-LemmeMorseIsochore-1979}, and of course Eliasson in~\cite{Eliasson-Thesis-1984} and~\cite{Eliasson-NormalformsHamiltonian-1990}. More recently, Chaperon in~\cite{Chaperon-GeoDiff-SingSysDyn-Asterisque-1986} and~\cite{Chaperon-SmoothFFaSimpleProof-2012}, Zung in \cite{Zung-Anoteonfocusfocus-1997} and~\cite{Zung-Anothernotefocus-2002}, and \SVN \& Wacheux in~\cite{SVNWacheux-SmoothNF_for_IHS_near_ff_sing-2013} provided new proofs and filled the technical gaps that remained in the original proof.
 
\subsection{Semi-local normal form}

Eliasson normal form is the first of many results generalizing the symplectic linearization of integrable systems.

Let $F:(M^{2n},\omega) \to \RR^n$ be a proper integrable semi-toric system. The orbit $\cO_m$ of a critical point $m \in M$ by the local Poisson $\RR^n$-action is a submanifold of dimension equal to the rank~$\kx$ of the action at the point $m$. For this section, we can assume without loss of generality that $df_1 \wedge \ldots \wedge df_{\kx} \neq 0$, that is, they are the transverse components of the critical point.

\begin{definition}
 The orbit $\cO_m$ is called non-degenerate if, when we take the symplectic quotient of a neighborhood of $\cO_m$ by the Poisson action of $\RR^{\kx}$ generated by $F_X:= (f_1,\ldots,f_{\kx})$, the image of $m$ is a non-degenerate fixed point.
\end{definition}

A non-degenerate orbit has only non-degenerate critical points of the same Williamson index. Thus it makes sense to talk of an orbit of a given Williamson index. The linear model of a non-degenerate orbit shall be for us the linear model of any point of this orbit. Of course, a non-degenerate Hamiltonian system has only non-degenerate orbits and non-degenerate leaves.

\begin{theorem}[Miranda \& Zung,~\cite{MirandaZung-Equivariantnormalform-2004}] 
\label{theo:M-Z}
Let $m$ be a critical point of Williamson type $\Bbbk$ of a semi-toric system $(M,\omega,F)$.

Then there exists a neighborhood~$\tilde{\cU}_m$ saturated with respect to the action of $F_X$ the transverse components of $F$ and a symplectomorphism: \[\varphi: (\tilde{\cU}_m ,\omega) \to \varphi(\tilde{\cU}_m) \subset L_\Bbbk \] such that:

\begin{itemize}
 \item[$\bullet$] $\varphi^\star F = Q_\Bbbk $,
 \item[$\bullet$] The transverse orbit $\cO_{F_X} (m)$ is sent to the zero-torus \[ \cT = \{ \Bx^{e,f} = \Bxi^{e,f} = 0 \ ,\ \bI = 0 \} \] of dimension $\kx$.
\end{itemize}

Moreover, if there exists a symplectic action of a compact group $G \acts M$ that preserves the moment map $F$, the action can be linearized equivariantly with respect to that group action.

\end{theorem}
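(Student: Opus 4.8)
The plan is to bootstrap from the pointwise Eliasson-type normal form to a \emph{saturated} neighborhood of the whole orbit $\cO_{F_X}(m)$, using the symmetry carried by the transverse flows, and then to upgrade the construction to an equivariant one by averaging over the relevant compact group. First I would reduce to a fixed point: near the compact orbit $\cO_{F_X}(m)$ the transverse components $F_X=(f_1,\dots,f_\kx)$ generate a free Hamiltonian $\TT^\kx$-action whose orbits are exactly the $\kx$-tori of the foliation, so one may form the symplectic quotient $(\widetilde M_0,\widetilde\omega_0)$ of a saturated neighborhood, and by the very definition of a non-degenerate orbit the image $\bar m$ of $\cO_m$ is a non-degenerate fixed point of the reduced system $\bar F$, of Williamson type $(\ke,\kf,\kh,0)$. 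Applying the Eliasson normal form (Theorem~\ref{theo:Eliasson_NF-ST_case}, or its general-$\kf$ version) to $\bar F$ at $\bar m$ gives, up to a diffeomorphism germ of the base $\RR^{n-\kx}$ that may then be absorbed, a symplectomorphism conjugating $\bar F$ to the reduced quadratic model.

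Next I would reconstruct the normal form over the orbit. A saturated neighborhood of $\cO_{F_X}(m)$ is a principal $\TT^\kx$-bundle over the reduced space, equipped with the flat connection furnished by the commuting Hamiltonian flows; its symplectic geometry is determined by $\widetilde\omega_0$, the action variables $\bI$ conjugate to the $\TT^\kx$-factor, and this connection, through the Marle--Guillemin--Sternberg (coisotropic embedding) description of a neighborhood of a torus orbit. Since the two reduced pictures agree by the first step, the given saturated neighborhood and the linear model $L_\Bbbk$ carry two symplectic forms that agree to the needed order and the same map $Q_\Bbbk$; a $\TT^\kx$-equivariant Darboux--Moser argument — interpolating $\omega_t$, solving $\iota_{X_t}\omega_t=-\dot\alpha_t$ with all primitives and homotopy operators chosen $\TT^\kx$-invariantly by averaging over the torus — then produces a symplectomorphism $\varphi$ with $\varphi^\star F=Q_\Bbbk$ taking $\cO_{F_X}(m)$ onto the zero-torus $\cT$.

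Finally, for the equivariance under a compact symplectic action $G\acts M$ preserving $F$: such a $G$ preserves $\cO_m$ and, together with the transverse torus, generates a compact group $H$; I would then rerun every averaging step above over $H$ in place of $\TT^\kx$ — using an equivariant version of the Eliasson normal form in the reduction step (available because the finitely many Williamson blocks carry a linear $H$-action that can be normalized invariantly), the equivariant Marle--Guillemin--Sternberg model in the reconstruction step, and the equivariant Moser trick throughout — after a Bochner linearization of $H$ near the orbit that legitimizes these local manipulations.

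The hard part will be the reconstruction step: genuinely promoting the reduced fixed-point normal form to a \emph{saturated} neighborhood of $\cO_m$, which forces one to control the holonomy of the foliation around the orbit and the rigidity of the reconstruction bundle. Compactness of the torus $\cO_m$ together with the periodicity of the transverse flows are precisely what make the torus averaging converge and the gluing rigid; the focus-focus blocks contribute no semi-local obstruction, their classical monodromy being invisible in a neighborhood of a single orbit. Keeping the $G$-equivariance coherent with this homogenization — rather than destroying it through a non-invariant choice of primitive — is the genuine book-keeping burden of the argument.
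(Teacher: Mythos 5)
The paper does not prove this statement: Theorem~\ref{theo:M-Z} is quoted from Miranda and Zung~\cite{MirandaZung-Equivariantnormalform-2004} as an external result, so there is no internal proof to compare against. Judged as a reconstruction of the Miranda--Zung argument, your outline follows the expected overall strategy (reduce to a fixed point, apply the pointwise normal form, reconstruct over the orbit, average for equivariance), but it leaves unproved exactly the two steps that constitute the content of the theorem.

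First, you assert that near $\cO_{F_X}(m)$ the transverse components ``generate a free Hamiltonian $\TT^{\kx}$-action''. This is not given: the joint flow of $F_X$ a priori defines only a local $\RR^{\kx}$-action, and nothing in the hypotheses says the individual $f_i$ have periodic flows (for a semi-toric system only $\check{F}$ is periodic, and compactness of the orbit does not by itself produce a smoothly varying lattice of periods extending to a saturated neighborhood). The existence of a genuine Hamiltonian $\TT^{\kx}$-action near the orbit is itself a theorem (Zung,~\cite{Zung-SymplecticTopology_I-1996}) and is a prerequisite for every subsequent step of your construction. Second, the reconstruction step, which you correctly flag as the hard part, is precisely where the proof lives: extending the fixed-point normal form around the torus by the flow produces, after traversing each generator of $\pi_1(\TT^{\kx})$, a foliation-preserving symplectomorphism of the local model (the holonomy), and one must show this cocycle can be normalized to the identity. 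Naming ``control the holonomy'' and ``rigidity of the reconstruction bundle'' is not an argument; the equivariant Moser interpolation you invoke applies only once the two forms are already known to agree along the torus and the gluing is known to be trivializable. Finally, a smaller but real error: Eliasson's theorem yields $\varphi^* F = G\circ Q_\Bbbk$ for a base diffeomorphism $G$ that cannot simply be ``absorbed'' to give $\varphi^\star F = Q_\Bbbk$ on the nose --- the honest conclusion is equivalence of the foliations, which is exactly why the paper subsequently needs Lemma~\ref{lemma:exist_G} and Theorem~\ref{theo:pres_semi-toric_G} to pin down $G$.
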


This theorem is an {\bf extension} of Eliasson normal form: it means that we can linearize the singular Lagrangian foliation of an integrable semi-toric system symplectically on an \emph{orbital} neighborhood of a non-degenerate critical point. The normal form is valid in a neighborhood that is larger than a ``point-wise'' $\varepsilon$-ball of a critical point: the \neigh is saturated with respect to the transverse action of the system. 

\subsection{Arnold-Liouville with singularities} \label{subsection:A-L_with_sing}

 We designate by $\cF:= \{ \text{connected components of } F^{-1}(c) | c \in Im(F) \} $ the foliation associated to $F$, and $\cB$ its base space. A leaf hence is a connected component of a fiber. We have the diagram
 
 \[
\xymatrix{ M \ar[rd]_{F}  \ar[r]_{\pi_\cF} &  \cB \ar[d]^{\tilde{F}} \\ 
\; & \hspace{7mm} F(M) \subset \RR^n }
\]

\begin{definition}

 A critical leaf of $\cF$ is a leaf in $\cF$ that contains a critical point for $F$. A singularity of $F$ is defined as (a germ of) a tubular neighborhood of a critical leaf.
 
\end{definition}

\subsubsection{Stratification by the orbits}

One of the consequence of the existence of focus-focus and hyperbolic critical points is that there is a distinction between the leaf containing a point and the joint orbit through that point. The following proposition describes precisely how non-degenerate critical leaves are stratified by orbits of different Williamson types:

\begin{proposition} \label{prop:closure_orbit}
 Let $m \in M$ be a point of Williamson type $\Bbbk$ of a proper, non-degenerate integrable Hamiltonian system $F$. Then:
 
\begin{enumerate}
 \item $\cO_F (m)$ is diffeomorphic to a direct product $\TT^c \times \RR^o$ (and $c+o = \kx$).
 \item For any point $m'$ in the closure of $\cO_F (m)$, $k_{\ue }(m') = k_{\ue }$, $k_{\uh}(m') \geqslant k_{\uh}$ and $k_{\uf}(m') \geqslant k_{\uf}$.
 \item The quantities $k_{\ue }$, $k_{\uf} + c$ and $k_{\uf} + o$ are invariants \emph{of the leaf}.
 \item For a non-degenerate proper semi-toric system, a leaf $\Lambda$ contains a finite number of $F$-orbits with a minimal $\Bbbk$ for $\preccurlyeq$, and the Williamson type for these $F$-orbits is the same. 
\end{enumerate}

\end{proposition}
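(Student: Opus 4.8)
The plan is to establish the four items in turn, with the Miranda--Zung normal form (Theorem~\ref{theo:M-Z}) as the central tool and with systematic use of the fact, recalled above, that every point of a non-degenerate orbit has the same Williamson type. For item~(1): since $F$ is proper the Hamiltonian flows of $f_1,\dots,f_n$ are complete, so they integrate to a genuine smooth $\RR^n$-action and $\cO_F(m)$ is one of its orbits. Its tangent space at $m$ is $\Span\{\ham{f_1}(m),\dots,\ham{f_n}(m)\}=\omega^{-1}(\Ima dF(m))$, of dimension $\rk\,dF(m)=\kx$. The stabiliser $\Gamma_m\subseteq\RR^n$ is a closed subgroup, hence $\Gamma_m\cong\RR^a\times\ZZ^b$ after a linear change of coordinates, so $\cO_F(m)\cong\RR^n/\Gamma_m\cong\TT^c\times\RR^o$ with $c=b$ and $c+o=\dim\cO_F(m)=\kx$.

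For item~(2), fix $m'\in\overline{\cO_F(m)}$ and apply Theorem~\ref{theo:M-Z} at $m'$: on a saturated neighbourhood $\tilde{\cU}_{m'}$ there are symplectic coordinates in which $F=Q_{\Bbbk'}$ and $m'$ sits on the zero-torus $\{\Bx^{e,f}=\Bxi^{e,f}=0,\ \bI=0\}$. The set $\cO_F(m)\cap\tilde{\cU}_{m'}$ is a union of joint-flow orbits of $Q_{\Bbbk'}$, on each of which every component of $Q_{\Bbbk'}$ is constant; in particular the $\ke'$ elliptic radii $(x^e_i)^2+(\xi^e_i)^2$ are constant, and since $m'$ is a limit point of $\cO_F(m)$ they vanish there. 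Hence $\Bx^e=\Bxi^e=0$ all along $\cO_F(m)\cap\tilde{\cU}_{m'}$, so reading off the Hessian blocks of $Q_{\Bbbk'}$ at a point $q$ of this set one sees that $q$ carries exactly the $\ke'$ elliptic blocks and only those hyperbolic and focus blocks whose remaining coordinates also vanish at $q$; in particular $\ke(q)=\ke'$, $\kh(q)\le\kh'$ and $\kf(q)\le\kf'$. Since all points of $\cO_F(m)$ have Williamson type $\Bbbk=\Bbbk(m)$, this gives $\ke=\ke'$, $\kh\le\kh'$ and $\kf\le\kf'$.

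For items~(3) and~(4) we first note that $\Lambda$ is compact (a component of a fibre of the proper map $F$) and that, by the local normal forms together with compactness, $\Lambda$ is a union of finitely many $F$-orbits whose incidence graph is connected. Constancy of $\ke$ on $\Lambda$ is then item~(2). To see that $\kf+c$ and $\kf+o$ are leaf invariants one refines the picture of item~(2): when $\cO'\subsetneq\overline{\cO}$, the normal form at a point of $\cO'$ exhibits $\cO$ nearby as swept out by those $\kf'-\kf$ focus blocks of $\cO'$ that are in ``regular mode'' along $\cO$, and (there being no hyperbolic blocks in the semi-toric setting) each such block contributes to $\cO$ exactly one circle direction and one line direction that collapse onto the torus $\cO'$; hence $c-c'=o-o'=\kf'-\kf$, so $\kf+c$ and $\kf+o$ are unchanged under incidence and therefore constant along the connected incidence graph. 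For item~(4), let $\cO$ be $\preccurlyeq$-minimal in $\Lambda$; if $o(\cO)>0$ then $\cO$ is non-compact, so $\overline{\cO}$ contains a strictly lower-dimensional orbit $\cO'$, and item~(2) together with $\ke'=\ke$, $\kh'=\kh=0$ and $\ke+2\kf'+\kx'=n=\ke+2\kf+\kx$ forces $\kf'>\kf$, i.e. $\Bbbk'\prec\Bbbk$ --- contradicting minimality. So every minimal orbit has $o=0$; finiteness is then immediate from the finiteness of the set of orbits of $\Lambda$, and by item~(3) two minimal orbits share the same $\ke$ and the same $\kf=(\kf+o)|_\Lambda$, hence the same $\kx=n-\ke-2\kf$ and (with $\kh=0$) the same Williamson type.

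The step I expect to be the main obstacle is the topological bookkeeping inside item~(3): extracting from the Miranda--Zung model the precise contribution --- one circle factor and one line factor per activated focus block --- of a focus block in ``regular mode'' to the larger orbit $\cO$. This is where the argument genuinely uses the structure of the focus--focus flow and its periodicity (the monodromy picture), rather than a mere dimension count; everything else reduces to reading Williamson blocks off a normal form, to elementary facts about orbits of $\RR^n$-actions, and to the incidence combinatorics of a compact stratified leaf.
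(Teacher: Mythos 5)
The paper does not actually prove this proposition: immediately after the statement it says ``All these assertions are proven by Zung in~\cite{Zung-SymplecticTopology_I-1996}'' and moves on. Your write-up therefore cannot coincide with ``the paper's proof''; what you have done is reconstruct Zung's argument from the Miranda--Zung normal form (Theorem~\ref{theo:M-Z}), and the reconstruction is essentially sound. Item~(1) via the closed stabiliser $\Gamma_m\cong\RR^a\times\ZZ^b$ of the $\RR^n$-action is the standard argument; item~(2) correctly reads the surviving Williamson blocks off $Q_{\Bbbk'}$ at points of $\cO_F(m)$ near $m'$ (using that the elliptic radii are first integrals vanishing at the zero-torus, hence along the whole trace of the orbit in the saturated neighborhood); and the incidence bookkeeping for items~(3)--(4) --- each ``activated'' focus block contributing one circle and one line to the larger orbit, plus connectedness of the finite incidence graph of orbits in a compact leaf --- is exactly the mechanism behind Zung's proof. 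You are also right to single out that bookkeeping as the step carrying the real content.

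Two caveats. First, your proof of item~(3) explicitly assumes there are no hyperbolic blocks, but the proposition states items~(1)--(3) for a general non-degenerate proper system; an activated hyperbolic block contributes a line but no circle, so by your own computation one would get $c-c'=\kf'-\kf$ while $o-o'=(\kf'-\kf)+(\kh'-\kh)$, making $\kf+c$ and $\kf+\kh+o$ (not $\kf+o$) the invariants in the presence of hyperbolic blocks. You should either prove the statement in that generality or note that you are only establishing the semi-toric case, which is all the paper uses. Second, in item~(4) you assert that $\overline{\cO}\setminus\cO$ contains a \emph{strictly lower-dimensional} orbit; this is not automatic for orbits of $\RR^n$-actions (think of a dense linear flow on a torus) and needs the extra observation that an orbit meeting the Miranda--Zung neighborhood of $m'$ inside the same leaf and having the same Williamson type as $m'$ must lie in the locus $\{\mathbf{x}^{e,f}=\boldsymbol{\xi}^{e,f}=0\}$ and hence coincide with the local orbit of $m'$; combined with item~(2) this forces $\kf'>\kf$ for any orbit in $\overline{\cO}\setminus\cO$. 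With these two points patched, your argument is a complete proof of the semi-toric statement and has the merit of making explicit what the paper delegates entirely to~\cite{Zung-SymplecticTopology_I-1996}.
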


All these assertions are proven by Zung in~\cite{Zung-SymplecticTopology_I-1996}. The last statement asserts that, in a non-degenerate critical leaf, a point with minimal Williamson type is not unique in general, but the minimal Williamson type is. 

This allows us to give the following definition:

\begin{definition}
 For a non-degenerate semi-toric system $(M,\omega,F)$, the Williamson type of a leaf $\Lambda$ is defined as 
\[ \Bbbk(\Lambda):= \min_{\preccurlyeq} \{ \Bbbk(m) \ |\  m \in \Lambda \} \ .\]

\end{definition}

That quantity is well defined because the only critical leaves that difers from those of toric systems, are the ones that contains singularities with focus-focus components. Since leaves are closed, points with minimal Williamson type must contain one $FF$ component and the maximal number of $E$ components. That ensures us the unicity of the Williamson type of the leaf. Let us mention here that while we have defined the Williamson type of non-degenerate orbits and leaves, there is no relevant notion of a Williamson type of a fiber.

\begin{definition}
 A non-degenerate critical leaf $\Lambda$ is called \emph{topologically stable} if there exists a saturated neighborhood $\cV(\Lambda)$ and a $\cU \subset \cV(\Lambda)$ a small neighborhood of a point $m$ {\bf of minimal rank} in $\Lambda$ such that 
\[ \forall \Bbbk \in \cW^n_0 \ ,\ F(P_\Bbbk (\cV(\Lambda)))  = F(P_\Bbbk (\cU)) \ . \]

 An integrable system will be called \emph{topologically stable} if all its critical points are non-degenerate and topologically stable.
\end{definition}

The assumption of topological stability rules out some pathological behaviours that can occur for general foliations. Note however that for all known examples, the non-degenerate critical leaves are all topologically stable, and it is conjectured that it is also the case for all analytic systems. 

Since the papers~\cite{Zung-SymplecticTopology_I-1996} and~\cite{Zung-SymplecticTopology_II-2003} of Zung, the terminology concerning the assumption of topological stability has evolved. One speaks now of the \emph{transversality assumption}, or the \emph{non-splitting condition}. This terminology was proposed first by Bolsinov and Fomenko in~\cite{BolsinovFomenko-book}.

We set in the next definition both the equivalence relation and the basis of simple ``basis'' for a leaf-wise normal form:

\begin{definition}
We say that two singularities are isomorphic if they are leaf-wise homeomorphic. We name the following singularities isomorphism classes ``simple'':

\begin{itemize}
 \item[$\bullet$]  A singularity is called of \emph{(simple) elliptic type} if it is isomorphic to $\cL^\ue$: a plane $\RR^2$ foliated by $q_e$.
 \item[$\bullet$]  A singularity is called of \emph{(simple) focus-focus type} if it is isomorphic to $\cL^f$, where $\cL^f$ is given by $\RR^4$ locally foliated by $q_1$ and $q_2$. One can show (see Proposition $6.2$ in~\cite{SVN-BohrSommerfeld-2000}) that the focus-focus critical leaf must be homeomorphic to a pinched torus $\check{\TT}^2$: a 2-sphere with two points identified. The regular leaves around are regular tori.

\end{itemize}
\end{definition}

Properties of elliptic and focus-focus singularities are discussed in details in~\cite{Zung-SymplecticTopology_I-1996}. In particular, the fact that we can extend the Hamiltonian $S^1$-action that exists near a focus-focus point to a tubular neighborhood of the focus-focus singularity guarantees that the simple focus-focus singularity is indeed a pinched torus.

\begin{assumption} \label{assum:simple_top_stable}
From now on, we will assume that all the systems we consider are {\bf simple and topologically stable}. In particular, simplicity implies that for the semi-toric systems we consider, focus-focus leaves will only have one vanishing cycle.
\end{assumption}

\subsubsection{Statement of singular Arnold-Liouville theorem}

Now we can formulate an extension of Liouville-Arnold-Mineur theorem to singular leaves. We call the next theorem a ``leaf-wise'' result, as we obtain a normal form for a \emph{leaf} of the system. While assertion $2.$ of the theorem extends Eliasson normal form, item $3.$ gives te ransom of such generalisation: here the normal form of the leaf doesn't preserve the symplectic structure.

\begin{theorem}[Arnold-Liouville with semi-toric singularities,~\cite{Zung-SymplecticTopology_I-1996}] \label{theo:A-L_with_sing}
Let $F$ be a proper semi-toric system, $\Lambda$ be a leaf of Williamson type $\Bbbk$ and $\cV(\Lambda)$ a saturated neighborhood of $\Lambda$ with respect to $F$.

Then the following statements are true:
\begin{enumerate}
 \item There exists an effective Hamiltonian action of $\TT^{k_\ue +\kf +\kx}$ on $\cV(\Lambda)$. There is a locally free $\TT^{\kx}$-subaction. The number $k_{\ue } +k_{\uf} +\kx$ is the maximal possible for an effective Hamiltonian action.
 \item If $\Lambda$ is topologically stable, $(\cV(\Lambda),\cF)$ is leaf-wise homeomorphic (and even diffeomorphic) to an almost-direct product of elliptic and focus-focus simple singularities:
 $(\cV(\Lambda),\cF) \simeq$
 \[ ( \cU(\TT^{\kx}),\cF_r ) \times \cL^\ue_1 \times \ldots \times \cL^\ue_{k_\ue } \times \cL^f_1 \times \ldots \times \cL^f_{k_{\uf}}  \]
where $( \cU(\TT^{\kx}),\cF_r )$ is a regular foliation by tori of a saturated neighborhood of $\TT^{\kx}$.

 \item There exists partial action-angle coordinates on $\cV(\Lambda)$: there exists a diffeomorphism $\varphi$ such that
 \[ \varphi^* \omega = \sum_{i=1}^{\kx} d\theta_i \wedge dI_i + P^* \omega_1 \]
 where $(\Btheta,\bI)$ are the action-angle coordinates on $T^* \cT$, where $\cT$ is the zero torus in Miranda-Zung equivariant Normal form theorem stated in~\cite{MirandaZung-Equivariantnormalform-2004} and $\omega_1$ is a symplectic form on $\RR^{2(n-\kx)} \simeq \RR^{2(k_{\ue } + 2k_{\uf})}$.
\end{enumerate}
\end{theorem}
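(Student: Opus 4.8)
The plan is to construct the global effective torus action first, and then to use it both to split the leaf neighborhood and to produce the action--angle coordinates, bootstrapping from the two local normal forms already at our disposal. \textbf{Existence of the torus action (item 1).} I would start from the Miranda--Zung orbital form (Theorem~\ref{theo:M-Z}) applied at a point $m \in \Lambda$ of minimal Williamson type: on a neighborhood saturated with respect to the transverse orbit it identifies $F$ with $Q_\Bbbk$, and in the linear model each elliptic block $q_e = x_i^2 + \xi_i^2$ generates a $2\pi$-periodic flow, each focus--focus block contributes one periodic generator through $q_2 = x_i^1 \xi_i^2 - x_i^2 \xi_i^1$, and the $\kx$ transverse components generate $\TT^{\kx}$. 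Together these furnish an effective Hamiltonian $\TT^{k_\ue + k_\uf + \kx}$-action near the minimal orbit. The substantive point is to promote this locally defined action to one on all of $\cV(\Lambda)$: the elliptic and transverse generators are intrinsic, their periods being determined by the foliation germ, so the actions computed in different charts agree on overlaps, while the focus--focus $S^1$-generator must be shown to extend across the pinch point of the singular fiber; here I would invoke the extension of the focus--focus action to a full tubular neighborhood of the pinched torus (the mechanism behind Proposition~$6.2$ of~\cite{SVN-BohrSommerfeld-2000}). Local freeness of the $\TT^{\kx}$-subaction is immediate since the transverse momenta have independent differentials throughout $\cV(\Lambda)$. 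Maximality follows from the structure of the symmetry: an effective Hamiltonian torus preserving $F$ lies in the centralizer of the Williamson algebra $\langle \cH[F]_m \rangle \subset \mathfrak{sp}(2n)$, whose maximal compact subtorus has dimension $k_\ue + k_\uf$ by Williamson's classification (one circle per elliptic and per focus--focus block, the second focus--focus generator $q_1 = x_i^1\xi_i^1 + x_i^2\xi_i^2$ being a non-compact, hyperbolic-type direction), so adding the $\kx$ transverse circles caps the total at $k_\ue + k_\uf + \kx$.

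\textbf{Product decomposition (item 2).} With the torus action in hand, I would glue the pointwise Eliasson product charts (Theorem~\ref{theo:Eliasson_NF-ST_case}) into a global splitting. Each Eliasson chart presents the foliation near a point as a product of the standard elliptic planes $\cL^\ue$, focus--focus blocks $\cL^f$, and a regular transverse factor; the simplicity Assumption~\ref{assum:simple_top_stable} guarantees that every focus--focus leaf is a single pinched torus with one vanishing cycle, so the singular strata of $\Lambda$ are exactly those predicted by the linear model. Topological stability (the non-splitting condition) is precisely what forbids the strata from recombining in an inequivalent way as one travels around the leaf, and I would use it to check that the transition maps between overlapping charts respect the product structure up to a finite group acting componentwise on the $\TT^{\kx}$-factor---hence an \emph{almost}-direct product. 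The torus action organizes this patching equivariantly, producing first a leaf-wise homeomorphism onto $(\cU(\TT^{\kx}),\cF_r) \times \cL^\ue_1 \times \cdots \times \cL^\ue_{k_\ue} \times \cL^f_1 \times \cdots \times \cL^f_{k_\uf}$, and, since the Eliasson model is smooth, upgrading it to a diffeomorphism.

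\textbf{Partial action--angle coordinates (item 3).} Finally I would apply the Liouville--Arnold--Mineur construction to the locally free $\TT^{\kx}$-subaction along the isotropic zero-torus $\cT$ of Theorem~\ref{theo:M-Z}. Because $\cT$ is isotropic and carries a locally free torus action, an equivariant Weinstein / Marle--Guillemin normal-form argument splits a neighborhood symplectically as a cotangent piece $T^*\cT$, carrying action--angle coordinates $(\Btheta,\bI)$, times a transverse symplectic slice. Writing $P$ for the projection onto that slice $(\RR^{2(k_\ue + 2k_\uf)},\omega_1)$ then yields $\varphi^*\omega = \sum_{i=1}^{\kx} d\theta_i \wedge dI_i + P^*\omega_1$, as claimed.

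\textbf{Main obstacle.} The hard part is the globalization in item~1: extending the periodic focus--focus $S^1$-flow across the pinch point of the singular fiber and verifying that the locally defined periodic actions match on overlaps in spite of the monodromy around the focus--focus leaf. Once the global effective torus action is secured, the product splitting and the partial action--angle form follow relatively formally, the only remaining care being the bookkeeping of the finite covering group that accounts for the \emph{almost}-direct (rather than direct) product.
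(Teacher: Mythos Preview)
The paper does not prove this theorem. Theorem~\ref{theo:A-L_with_sing} is stated with attribution to~\cite{Zung-SymplecticTopology_I-1996} and is used as a black box throughout the rest of the article (notably in the proof of Theorem~\ref{theo:strat_Williamson_M} and in Section~4); no argument for it is given here beyond the citation. So there is no ``paper's own proof'' against which to compare your proposal.

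That said, your sketch is a reasonable outline of how Zung's original argument is organized, and you have correctly located the genuine difficulty in item~1: the point is not the \emph{existence} of local periodic flows---those are read off the Eliasson/Miranda--Zung model---but their \emph{global} coherence on a full saturated neighborhood of the leaf, in particular the extension of the focus--focus $S^1$-generator across the pinch and the matching of locally defined actions despite monodromy. Two cautions. First, in item~2 you lean on Eliasson charts to produce the product splitting, but the splitting in Zung's theorem is only a \emph{foliation-preserving diffeomorphism}, not a symplectomorphism (the paper's Remark immediately after the theorem stresses exactly this); your phrasing ``since the Eliasson model is smooth, upgrading it to a diffeomorphism'' is fine, but be careful not to suggest that the Eliasson symplectic normal form globalizes---it does not, and that is precisely why item~3 only gives \emph{partial} action--angle coordinates with a residual $P^*\omega_1$. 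Second, your maximality argument via the centralizer of the Williamson Cartan subalgebra is morally right but is a linear-algebra statement at a single point; to rule out a larger effective Hamiltonian torus on the whole neighborhood one also needs that any additional periodic Hamiltonian commuting with $F$ would have to linearize compatibly at the minimal orbit, which is again a consequence of the normal form but should be said explicitly.
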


Theorem~\ref{theo:A-L_with_sing} says that in particular for semi-toric systems, under this mild assumption that is topological stability on the leaves, a critical leaf $\Lambda$ is diffeomorphic to a product of the ``simplest'' regular, elliptic and focus-focus leaves one can find. 

\begin{remark}
 Assertion $3.$ of Theorem~\ref{theo:A-L_with_sing} explains that {\bf we do not have a symplectomorphism} in the assertion $2.$
 
 Also, one should notice that in Theorem~\ref{theo:A-L_with_sing}, it is only because we made the Assumption~\ref{assum:simple_top_stable} that the singularity is leaf-wise {\bf diffeomorphic} to an almost-direct product of simple sigularities. For instance, were there more than one pinch on the singuarity, one could only guarantee the existence of an homeomorphism between the two.
\end{remark}

This theorem is the last generalisation of Eliasson's point-wise normal form. As of now, we shall speak of Eliasson-Miranda-Zung normal form and precise if necessary what type of normal form we are working on.

\subsection{Stratification by Williamson index}

 The stratification of $M$ by the rank of the moment map is an important feature in the study of toric system in the theorem of Atiyah - Guillemin \& Steinberg. The following results are consequences of Theorem~\ref{theo:A-L_with_sing}, and of its proof in~\cite{Zung-SymplecticTopology_I-1996}. By stratification, we mean here the following

\begin{definition}

A stratified manifold is a triplet $(M,\cS,\cI)$ with $M^n$ topological manifold  is a finite partition $\cS=(S_i)_{i \in \cI}$ of $M^n$ indexed by an ordered set $(\cI,\preceq)$ such that  

\begin{itemize}
 \item Decomposition: for $i \in \cI$ the $S_i$'s are smooth manifolds and for $i,j \in \cI$, $i \preceq j$ if and only if $S_i \subseteq \overline{S_j}$.
 
 \item Splitting condition: by induction over the dimension of the stratified manifold $n$.

 If $x \in S_i$, for a neighborhood $U_x$ of $x$ in $\RR^n$, there exists a disk $D_i \subset \RR^{\dim(S_i)}$ and a cone $\cC(J) = $ over a $(n − \dim{S_i} − 1)$-dimensional stratified smooth manifold $J$ (and $(n-\dim{S_i}-1) \leq n-1$ ) such that:
$U_x$ and $D_i \times \cC(J)$ are isomorphic as stratified manifolds.

 
\end{itemize}

\end{definition}

The definition above is consistent, since strafications of manifolds of dimension $0$ and $1$ are obvious. Usually, one defines stratifications over larger classes of objects that are not regular enough for the given topology, while the strata are. The example to remember is the cone, or the manifolds with corners: they won't be smooth manifold but their strata will be.

\begin{theorem} \label{theo:strat_Williamson_M}
 
 Let $(M^{2n},\omega,F)$ be a semi-toric integrable Hamiltonian system. 
 
 Then we have the following results:
 
 \begin{itemize}
  \item For any $\Bbbk \in \cW(F)$, each connected component of $P_\Bbbk(M)$ is a smooth open symplectic manifold diffeomorphic to $T^* \TT^\kx$. The symplectic form is given by the partial \act coordinates on it.
  
  \item The triplet $(P_\Bbbk (M), \omega_\Bbbk, F_\Bbbk)$, with $F_\Bbbk:= F|_{P_\Bbbk (M)}$ is an integrable Hamiltonian system with no critical points.
 
 \item  The triplet $(M,P_\Bbbk(M),\cW(F))$ is a stratified manifold. 

 \end{itemize}

\end{theorem}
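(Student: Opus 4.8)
The plan is to deduce all three assertions from the semi-local normal forms---principally Theorem~\ref{theo:M-Z} and Theorem~\ref{theo:A-L_with_sing}---together with the standing Assumption~\ref{assum:simple_top_stable}. First I would pin down the local structure of $P_\Bbbk(M)$: around any $m\in P_\Bbbk(M)$, Theorem~\ref{theo:M-Z} gives a saturated neighborhood $\tilde\cU_m$ and a symplectomorphism $\varphi$ onto an open subset of $L_\Bbbk$ with $\varphi^*F=Q_\Bbbk$, and a direct computation shows that $dQ_\Bbbk$ has corank $\ke+2\kf$---i.e.\ the point is of Williamson type $\Bbbk$---exactly on the common zero locus of the elliptic and focus--focus coordinates, the linear symplectic subspace $\{\Bx^{e,f}=\Bxi^{e,f}=0\}\cong T^*\TT^{\kx}$. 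Hence $P_\Bbbk(M)$ is an embedded $2\kx$-dimensional submanifold onto which $\omega$ restricts non-degenerately; moreover every $X_{f_i}$ is tangent to it (the $\kx$ transverse components generate the locally free $\TT^{\kx}$-action of Theorem~\ref{theo:A-L_with_sing}, while the remaining Hamiltonian vector fields, and the differentials $df_j$ of the non-transverse components, vanish identically on $P_\Bbbk(M)$).

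Next I would globalise this to a connected component $C$ of $P_\Bbbk(M)$, which is where the real work lies. Applying Theorem~\ref{theo:A-L_with_sing}(3) to a leaf $\Lambda$ of type $\Bbbk$, on a saturated neighborhood $\cV(\Lambda)$ one has $\varphi^*\omega=\sum_{i=1}^{\kx}d\theta_i\wedge dI_i+P^*\omega_1$, so $P_\Bbbk(M)\cap\cV(\Lambda)$ is symplectomorphic to $(T^*\TT^{\kx},\sum_i d\theta_i\wedge dI_i)$ with the action variables $\bI$ an affine function of the globally defined transverse components $F_X$. Covering $C$ by such pieces and noting that on overlaps the charts share both the projection $F_X$ and the symplectic form, the transition maps are symplectic and fibered over $\RR^{\kx}$, so they assemble $C$ into a $T^*\TT^{\kx}$-bundle over its orbit space $C/\TT^{\kx}$, on which $F_X$ descends to an immersion into $\RR^{\kx}$ carrying an integral affine structure. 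What remains---and what I expect to be the main obstacle---is to show that this bundle is trivial and that $F_X$ identifies $C/\TT^{\kx}$ with all of $\RR^{\kx}$ rather than merely an open subset: the triviality is the vanishing of the usual obstruction class once the base is contractible, while the identification amounts to completeness of the affine base, which I would obtain from properness of $F$ via the convexity/monodromy argument already present in the proof of Theorem~\ref{theo:A-L_with_sing} in~\cite{Zung-SymplecticTopology_I-1996}. Granting this, $C\cong T^*\TT^{\kx}$ with its canonical form, which is precisely the form in the partial action--angle coordinates, proving assertion (1).

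Assertion (2) is then formal: on $(P_\Bbbk(M),\omega_\Bbbk)$ the transverse components $F_X=(f_1,\dots,f_{\kx})$ restrict to $\kx$ functions that Poisson-commute for $\omega_\Bbbk$---because every $X_{f_i}$ is tangent to $P_\Bbbk(M)$, so the ambient bracket restricts---and whose differentials are independent everywhere since the remaining $df_j$ vanish there; thus $F_\Bbbk=F|_{P_\Bbbk(M)}$ has constant rank $\kx=\tfrac12\dim P_\Bbbk(M)$ and $(P_\Bbbk(M),\omega_\Bbbk,F_\Bbbk)$ is a critical-point-free integrable system, foliated by the tori $\TT^{\kx}\times\{\bI\}$.

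Finally, for assertion (3): the partition is finite since $\cW(F)$ is a finite subset of the solutions of \eqref{equ:Williamson_type}, and each $P_\Bbbk(M)$ is a smooth manifold by (1). For the decomposition axiom I would again read off $Q_\Bbbk$: near a point of type $\Bbbk$, the Williamson types that occur are exactly those obtained by resolving some elliptic and/or focus--focus blocks, i.e.\ the $\Bbbk'$ with $\ke'\leqslant\ke$, $\kf'\leqslant\kf$ and $\kh'=\kh=0$ (semi-toric), each of which does occur arbitrarily close to the point; this yields at once $P_\Bbbk(M)\subseteq\overline{P_{\Bbbk'}(M)}$ whenever $\Bbbk\preccurlyeq\Bbbk'$ and, conversely, that $P_\Bbbk(M)\subseteq\overline{P_{\Bbbk'}(M)}$ forces $\Bbbk\preccurlyeq\Bbbk'$, so $\Bbbk\preccurlyeq\Bbbk'\iff P_\Bbbk(M)\subseteq\overline{P_{\Bbbk'}(M)}$ with index poset $(\cW(F),\preccurlyeq)$, already verified to be an ordered set. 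For the splitting condition I would use Eliasson's normal form (Theorem~\ref{theo:Eliasson_NF-ST_case}): near $m\in P_\Bbbk(M)$, $\varphi$ splits $\RR^{2n}$ as $\RR^{2\kx}\times(\RR^{2\ke}\times\RR^{4\kf})$ with $\varphi^*F=G_\Bbbk\circ Q_\Bbbk$, and $Q_\Bbbk$ is homogeneous of degree $2$ on the second factor; hence the induced Williamson stratification of $(\RR^{2\ke}\times\RR^{4\kf})\setminus\{0\}$ is dilation-invariant, so $\RR^{2\ke}\times\RR^{4\kf}$ is the cone $\cC(J)$ over $J=S^{2\ke+4\kf-1}$ of dimension $2n-2\kx-1$ with the induced stratification, and a neighborhood $U_m$ is isomorphic, as a stratified manifold, to $D_\Bbbk\times\cC(J)$ with $D_\Bbbk\subset\RR^{2\kx}$; since $J$ has lower dimension it is a stratified manifold by the induction in the definition, and the verification is complete.
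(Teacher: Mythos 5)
Your proposal is correct and follows the same backbone as the paper's proof: read off the local structure of $P_\Bbbk(M)$ from the Miranda--Zung/Zung normal forms to get items (1) and (2), then verify the decomposition condition from the local models and the splitting condition from a cone structure transverse to the stratum. Two points of divergence are worth recording. First, for item (1) the paper contents itself with the local statement: it shows $P_\Bbbk(\cV)$ is an open piece $\TT^{\kx}\times\mathring{D}^{\kx}$ of $T^*\TT^{\kx}$ on which $\omega_1$ vanishes, and stops there; you instead take seriously the claim that an entire connected component is diffeomorphic to $T^*\TT^{\kx}$, and you correctly identify the real issue (triviality of the $\TT^{\kx}$-bundle over the orbit space and the identification of the integral-affine base with $\RR^{\kx}$), which you leave as a sketch resting on properness and convexity. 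Your version is more honest about what the global claim requires, though it is not fully closed; the paper's is complete only as a local statement. Second, for the splitting condition the paper reduces, via the almost-direct product of Theorem~\ref{theo:A-L_with_sing}, to the simple elliptic and simple focus-focus factors and exhibits the cone in each case ($\cC(S^1)$ and $\cC(S^3)$ respectively), whereas you observe that $Q_\Bbbk$ is homogeneous of degree $2$ in the elliptic and focus--focus variables, so the induced Williamson stratification of $\RR^{2\ke}\times\RR^{4\kf}\setminus\{0\}$ is dilation-invariant and the whole transverse slice is the cone over the stratified sphere $S^{2(n-\kx)-1}$ in one stroke. This is a cleaner and slightly more general argument (it would survive without the almost-direct product decomposition), at the mild cost of having to check that the link $S^{2(n-\kx)-1}$ is itself a stratified manifold, which your inductive appeal to the definition handles. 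The arguments for item (2) and for the decomposition axiom coincide with the paper's in substance.
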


\begin{proof}
 
 Concerning points $1.$ and $2.$, if we take a critical point $p \in M$ of Williamson index $\Bbbk$, with items $2.$ and $3.$ of Theorem~\ref{theo:A-L_with_sing} we have a tubular neighborhood $\cV$ of the leaf containing $p$ such that $\cV$ is leaf-wise diffeomorphic to \[ ( \cU(\TT^{\kx}),\cF_r ) \times \cL^\ue_1 \times \ldots \times \cL^\ue_{k_{\ue }} \times \cL^f_1 \times \ldots \times \cL^f_{k_{\uf}}, \]
 and a diffeomorphism $\varphi$ such that $\varphi^* \omega = \sum_{i=1}^{\kx} d\theta_i \wedge dI_i + P^* \omega_1$. The subset $P_\Bbbk (\cV)$ is diffeomorphic in these local coordinates to $\{ q_1 = q_2 = 0, x^\ue_1 = y^\ue_1 = \ldots = x^\ue_{k_\ue} = y^\ue_{k_\ue} = 0 \}$, that is, to an open subset of $T^* \TT^{\kx}$ of the form $\TT^{\kx} \times \mathring{D}^{\kx}$. On it, the 2-form $\omega_1$ in Zung's theorem vanishes so $P_\Bbbk (\cV)$ is described by the partial action-angle coordinates given by Miranda and Zung in Theorem~\ref{theo:M-Z}. This proves item $1.$.
 
 Next, we have that $F_\Bbbk \in \smooth(P_\Bbbk(M) \to \RR^{\kx})$, it is clearly integrable as an Hamiltonian system of $P_\Bbbk(M)$. Moreover, a critical point for $F_\Bbbk$ is a critical point for $F$ with a smaller Williamson index, which is impossible on $P_\Bbbk (M)$ by definition. Hence $F_\Bbbk$ has no critical point on $P_\Bbbk(M)$. This proves point $2.$.
 
 To conclude with point $3.$, we first have to prove the decomposition condition. it is clear that $\{ P_\Bbbk (M) \}_{\Bbbk \in \cW(F)}$ is a partition of $M$ by smooth manifolds. The indexing set is the poset $\cW(F)$. Remebering Proposition~\ref{prop:closure_orbit}, for $\Bbbk, \Bbbk'$ in $\cW(F)$, if $P_\Bbbk(M) \subseteq \overline{P_{\Bbbk'}(M)}$, then $\Bbbk \preccurlyeq \Bbbk'$. To prove the converse statement, one can notice with the local models that for a critical point $m$ of Williamson index $\Bbbk$, $m$ can always be attained by a sequence of points that have the same Williamson index $\Bbbk'$, provided that $\Bbbk' \succcurlyeq \Bbbk$. In particular, $m$ is in $\overline{P_{\Bbbk'} (M)}$. 
 
 For the splitting condition, with Item $2.$ of Theorem~\ref{theo:A-L_with_sing} we see that we only need to treat the simple elliptic and focus-focus cases with local models. In the elliptic case $P_E{\RR^2}$ is just a point: a neighborhood of the critical point is a disk, it is homeomorphic to the critical point times a cone over a small circle. For the focus-focus case, it is not more complicated: $P_{FF}(\RR^4)$ is again a point, and we need to show there exists a $3$-dimensional stradispace $L$ such that a neighborhood of the focus-focus point is homeomorphic to this point times the cone over $L$. We can just take the $3$-sphere $S^3$ and take the cone over it: it is homeomorphic to the $4$-ball, and hence is a neighborhood of a focus-focus point.
 
\end{proof}

There is a natural way to refine the stratification, by distinguishing the connected components of the $P_\Bbbk(M)$. We can note $P^l_\Bbbk(M)$, with $l = 1..L(\Bbbk)$ an integer labelling the $l$-th connected components of $P_\Bbbk(M)$. The new ordering set is thus obtained from $\cW(F)$ by ``splitting'' each $\Bbbk$: we consider the couples $(\Bbbk,l)$, and write $(\Bbbk,l) \overline{\preccurlyeq} (\Bbbk',l')$ if and only if $\Bbbk \preccurlyeq \Bbbk'$ and $P^l_\Bbbk \subseteq \overline{P^l_\Bbbk(M)}$. The proof of the stratification is exactly the same since the spliting condition is a local condition.

Now, if $\kf = 1$, we can take the skeleton of a strata, $P^l_\Bbbk (M)$. Here, the skeleton is obtained by patching smoothly pieces of dimension $\leqslant \kx-2$, so we can extend $\omega_\Bbbk$ and $F_\Bbbk$ such that $(P^l_{\preccurlyeq \Bbbk},\omega_{\preccurlyeq \Bbbk},F_{\preccurlyeq \Bbbk})$ is an integrable Hamiltonian system on a closed manifold. This system is toric: the singularities are non-degenerate and without hyperbolic component (such component would have to come from the total system) or focus-focus component (because $\kf = 1$, such point would be a critical point with $\kf \geqslant 2$ on $(M,\omega,F)$, which is impossible). Lastly, since the Williamson type is invariant by the Hamiltonian flow, $F_\Bbbk$ yields a $\TT^\kx$-action on $P_{\preccurlyeq \Bbbk}(M)$.

\section{ Local models for semi-toric values } \label{section:loc_mod_ST_val}

Eliasson normal form theorem~\ref{theo:Eliasson_NF-ST_case} and Miranda-Zung theorem~\ref{theo:M-Z} give us a general understanding of the commutant of a semi-toric system: we have a local model of the image of the moment map near a critical value, that can be precised in the semi-toric case. To do so, we remind first these two lemmas. 

Two Lagrangian foliations $\cF$ and $\cF'$ are said to be equivalent if there exists a symplectomorphism such that for each $L \in \cF$, $\varphi(L) = L' \in \cF'$.

\begin{lemma} \label{lemma:exist_G}
Let $\mathcal{F}$ be a singular Liouville foliation given by a momentum map $F:M \to \RR^k$. Let $\mathcal{F}'$ be a singular Liouville foliation given by a momentum map $F':N \to \RR^k$.
If the level sets are locally connected, then for every smooth symplectomorphism $\varphi: \cU \subset M \to \cV \subset N $ where $\cU$ is an open neighborhood of $p \in M$, $\cV$ a neighborhood of $p' = \varphi(p) \in N$, and such that \[ \varphi^* \mathcal{F} = \mathcal{F}' ,\] there exists a unique local diffeomorphism $ G: (\RR^k, F(p)) \to (\RR^k, F'(p')) $ such that 

\[ F \circ \varphi = G \circ F' .\]
\end{lemma}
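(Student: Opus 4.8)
The plan is to reduce the lemma to a statement about a \emph{single} manifold. Write $\widehat F := F\circ\varphi$, a smooth map defined on a \neigh of the relevant base point; by the hypothesis $\varphi^\ast\cF=\cF'$ the two maps $\widehat F$ and $F'$ induce there the \emph{same} singular foliation by connected components of fibres, and what remains is to produce a local diffeomorphism $G$ of the base intertwining them as in the statement, $\widehat F = G\circ F'$. Uniqueness should be disposed of first: the set $\mathcal R$ of regular points of $F'$ is open and dense, $F'(\mathcal R)$ contains a dense open subset of a \neigh of $F'(p')$, and on $F'(\mathcal R)$ the identity $\widehat F=G\circ F'$ together with the fact that $F'$ is a submersion there pins $G$ down pointwise; hence any continuous $G$ satisfying the relation is unique, and this also forces any smooth extension to be unique.

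For existence I would first build $G$ as a set map, and this is where the hypothesis that the level sets are locally connected is used: after shrinking the \neigh to a relatively compact one we may assume that each nonempty fibre of $F'$ in it is connected, hence is a single leaf $L$ of $\cF'$; then $\varphi$ carries $L$ into a single fibre of $F$, so $\widehat F=F\circ\varphi$ is constant on $L$, and we may unambiguously set $G(c):=\widehat F(x)$ for any $x\in F'^{-1}(c)$. By construction $\widehat F=G\circ F'$ on the whole \neigh, and continuity of $G$ at $F'(p')$ follows from a compactness argument using continuity of $\widehat F$ and $F'$ on the (relatively compact) \neigh together with the fact that the fibres are single leaves.

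Smoothness of $G$ on the regular values $F'(\mathcal R)$ is routine: near a regular point $q$ the map $F'$ is a submersion, $\widehat F$ is constant on the fibres of $F'$, so the implicit function theorem produces a local smooth $G_q$ with $\widehat F=G_q\circ F'$, and these $G_q$ agree with the set map $G$ and patch to a smooth $G$ on $F'(\mathcal R)$. \textbf{The main obstacle is the smooth extension of $G$ across the critical values.} For this I would invoke the normal forms available in the semi-toric setting — Eliasson's Theorem~\ref{theo:Eliasson_NF-ST_case} (and, if an orbital version is wanted, Miranda--Zung's Theorem~\ref{theo:M-Z}) — which identify, near any critical point, $F'$ with $G_\Bbbk\circ Q_\Bbbk$, i.e. the pair (momentum map, foliation) with a product of standard elliptic and focus-focus blocks up to a local diffeomorphism of the base. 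Composing with $G_\Bbbk$ reduces the claim to: a smooth function constant on the fibres of $Q_\Bbbk$ is a smooth function of $Q_\Bbbk$; on the linear (transverse) components this is trivial, on an elliptic block one uses that a smooth function constant on the circles $\{x^2+\xi^2=\mathrm{const}\}$ is a smooth function of $x^2+\xi^2$ (Whitney), and on a focus-focus block that a smooth function constant on the fibres of $(\re(z_1\bar z_2),\im(z_1\bar z_2))$ near $0$ is a smooth function of those two. Patching this local smoothness with the smoothness on $F'(\mathcal R)$ yields a smooth $G$ on a full \neigh of $F'(p')$. Finally, running the same construction with $\varphi^{-1}$ gives a smooth local map $G'$ with $F'=G'\circ\widehat F$; by the uniqueness part $G'\circ G=\id$ and $G\circ G'=\id$ near the base point, so $G$ is a local diffeomorphism. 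I expect every step except the smooth extension across $\Crit$ to be formal; that step is where local connectedness and (through the normal forms) the structure of non-degenerate critical points are genuinely used.
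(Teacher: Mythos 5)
The paper states Lemma~\ref{lemma:exist_G} without giving any proof (it is presented as a reminder, with no reference either), so there is nothing to compare your argument against line by line; what you have written is the standard proof of this folklore statement, and its overall architecture is sound. You correctly isolate the real content: defining $G$ as a set map is immediate once local connectedness lets you identify fibres with leaves, smoothness at regular values is the implicit function theorem, and the only serious step is the smooth extension of $G$ across the critical values, which you reduce via Eliasson's normal form (Theorem~\ref{theo:Eliasson_NF-ST_case}) to the division property ``a smooth function constant on the fibres of $Q_\Bbbk$ is a smooth function of $Q_\Bbbk$'', handled componentwise by the Whitney-type lemma for $x^2+\xi^2$ and its focus-focus analogue. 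The closing argument (apply the construction to $\varphi^{-1}$ and use uniqueness to invert $G$) is also the right one.

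Two caveats. First, your uniqueness argument asserts that $F'(\cR)$ contains a dense open subset of a full \neigh of $F'(p')$; this fails as soon as the Williamson type has an elliptic component ($\ke\geqslant 1$), since the local image of $F'$ is then a corner $\{q^\ue\geqslant 0\}$ and misses an open set adjacent to $F'(p')$. Consequently $G$ is pinned down only on the local image of $F'$ (together with its infinite jet along it), and is genuinely non-unique as a germ on $\RR^k$; this is a defect of the lemma as stated as much as of your proof, but your write-up should not claim density where there is none. Second, by invoking Eliasson you prove the lemma only for non-degenerate singularities of almost-toric type, not for the arbitrary ``singular Liouville foliations'' of the statement; this restriction is harmless for the paper's purposes, but it should be made explicit, since the smooth extension across a degenerate critical value is not formal and is not delivered by your argument.
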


\begin{lemma} \label{lemma:finite_number_cc_fiber}
 Let $(M,\omega,F)$ be a proper almost-toric system. Then its fibers has a finite number of connected components.
\end{lemma}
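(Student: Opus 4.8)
The plan is to reduce the assertion to a local one via the Eliasson normal form and then close it with a compactness argument. Since $F$ is proper and $\{c\}$ is compact, the fiber $F^{-1}(c)$ is compact for every $c\in\RR^n$, so it suffices to show that every point of $M$ has an open neighborhood meeting each fiber in a connected (possibly empty) set: indeed, if a space is covered by finitely many connected subsets, then each such subset lies in a single connected component and the subsets cover, so the number of components is at most the number of subsets, and properness supplies a finite subcover.

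For the local statement, fix $p\in M$. Since all critical points are non-degenerate and there are no hyperbolic blocks, the Eliasson normal form (Theorem~\ref{theo:Eliasson_NF-ST_case} for the semi-toric type, and its analogue for an arbitrary Williamson type with $\kh=0$ through the references cited there) furnishes a neighborhood $\cU_p$, a symplectomorphism $\varphi$ onto a neighborhood of $0\in\RR^{2n}$, and a local diffeomorphism $G$ of $(\RR^n,0)$ with $\varphi^*F=G\circ Q_\Bbbk$, where $Q_\Bbbk$ is the product of the elliptic functions $x_i^2+\xi_i^2$, the focus-focus pairs $(x^1\xi^1+x^2\xi^2,\ x^1\xi^2-x^2\xi^1)$, and the linear transverse components $\xi_j$. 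Since $G$ is a diffeomorphism, the fibers of $F$ near $p$ are exactly the $\varphi$-preimages of the fibers of $Q_\Bbbk$ near $0$. I would then shrink $\cU_p$ so that $\varphi(\cU_p)$ is a polydisk $\prod_i D_i$ adapted to this product decomposition, and observe that $Q_\Bbbk^{-1}(\gamma)\cap\prod_i D_i$ splits as a product over the factors: an elliptic factor contributes a circle, a point, or $\emptyset$; a transverse factor contributes a convex affine slice of a disk; and — the only step that is not formal — a focus-focus factor $\{x^1\xi^1+x^2\xi^2=\gamma^1,\ x^1\xi^2-x^2\xi^1=\gamma^2\}$, written in complex coordinates as $\{\bar z w=\gamma\}\subset\RR^4\cong\CC^2$, meets a round ball in the graph $w=\gamma/\bar z$ over an annulus of the $z$-plane when $\gamma\neq 0$, and in two disks glued at their centers when $\gamma=0$; in each case connected or empty. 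A finite product of connected-or-empty sets is connected or empty, which proves the local claim with $W_p:=\varphi^{-1}(\prod_i D_i)$.

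Finally, fixing $c$, I would cover the compact fiber $F^{-1}(c)$ by the sets $W_p$ for $p\in F^{-1}(c)$, extract a finite subcover $W_{p_1},\dots,W_{p_N}$, and note that $F^{-1}(c)=\bigcup_{j=1}^N\bigl(W_{p_j}\cap F^{-1}(c)\bigr)$ is a union of $N$ nonempty connected sets, so by the elementary remark above $F^{-1}(c)$ has at most $N$ connected components. I expect the main obstacle to be the focus-focus computation of the second paragraph — verifying that a level set of $\bar z w$ remains connected inside a ball of $\CC^2$ — together with the routine care needed in shrinking the polydisk so that $Q_\Bbbk$ maps it into the domain of $G$; the rest is bookkeeping.
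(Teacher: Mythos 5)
Your proof is correct and follows essentially the same route as the paper's: both reduce the statement to the local connectedness of the fibers of the Eliasson normal form $Q_\Bbbk$ and then conclude by compactness of the fiber (from properness) and a finite subcover. The only organizational difference is that you cover the fiber by pointwise neighborhoods rather than by saturated neighborhoods of whole leaves, and you explicitly verify the focus-focus computation ($\{\bar z w=\gamma\}$ meets a ball in a connected set) that the paper leaves implicit.
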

\begin{proof}{\emph{of Lemma }} \ref{lemma:finite_number_cc_fiber}
 Let $c$ be a value of $F$, and $L$ be a connected component of $F^{-1}(c)$. On each point of $L$ we can apply Eliasson normal form. The leaf $L$ is compact and there is only a finite number of local models that can occur on $L$ (actually, there can be only only two different local models on a same leaf), this gives us the existence of an open neighborhood $\cV(L)$ of $L$ in which there is no other connected components of $F^{-1}(c)$: the leaves are separated in $F^{-1}(c)$. Another way to look at it is to take the saturated neighborhood of the leaf in Zung's leaf-wise model of singularities.
 
 Now, we have that $\bigcup_{L \subseteq F^{-1}(c)} \cV(L)$ is an open covering of $F^{-1}(c)$, which is compact by the properness of $F$. We can thus extract a finite sub-covering of it. It implies that there is only a finite number of connected components. 
\end{proof}

\subsection{Symplectomorphisms preserving a semi-toric foliation}

Eliasson-Miranda-Zung normal form gave us the existence of $G$ a local diffeomorphism of the linear model associated to any integrable system. The theorem presented here gives precisions about the form of $G$ in the semi-toric case

\begin{theorem} \label{theo:pres_semi-toric_G}
Let $F$ be a semi-toric integrable system, and $m$ be a critical point of Williamson type $\Bbbk$, with $k_{\uf} = 1$. Let $\cU_m$ be a \neigh of $m$, $\varphi: (\cU_m, \omega) \to ( \varphi(\cU_m) \subseteq L_\Bbbk, \omega_\Bbbk)$ a symplectomorphism sending the transverse orbit of $m$ on the zero-torus and such that the foliations $\varphi^* \cF$ and $\cQ_\Bbbk$ are equivalent on $\cU_m$.

Then the diffeomorphism $G: U \to G(U) \subseteq F(M)$ given by Eliasson-Miranda-Zung normal form and lemma~\ref{lemma:exist_G} is such that:

\begin{enumerate}
 \item $ F \circ \varphi^{-1} = G \circ Q_\Bbbk$, 
 \item $ \check{F} = A\cdot  \check{Q}_\Bbbk + \check{F}(c)$, with $A \in GL_{n-1} (\ZZ)$.
\end{enumerate}
  
\end{theorem}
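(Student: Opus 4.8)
The plan is to read off the statement from the existence of $G$ (Lemma~\ref{lemma:exist_G}) by keeping track of which Hamiltonian flows are $2\pi$-periodic and then using effectiveness of the two torus actions in play. Assertion (1) is exactly the content of Lemma~\ref{lemma:exist_G}: since $\varphi$ is a symplectomorphism intertwining the foliations $\cF$ and $\cQ_\Bbbk$ and the fibres are locally connected, there is a unique local diffeomorphism $G\colon(\RR^n,F(m))\to(\RR^n,0)$ with $F\circ\varphi^{-1}=G\circ Q_\Bbbk$ (possibly after composing $\varphi$ with a foliation-preserving symplectomorphism of the model, which changes nothing below). Write $Q_\Bbbk=(f^1,\check Q_\Bbbk)$, where $f^1=x^1\xi^1+x^2\xi^2$ is the focus--focus component with non-periodic flow and $\check Q_\Bbbk$ collects the remaining $n-1$ components of $Q_\Bbbk$ (the elliptic $e_i$'s, the component $f^2$, and the transverse $\xi_j$'s), all of which have $2\pi$-periodic Hamiltonian flow in the model; write $G=(G_1,\check G)$ accordingly.

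The technical heart is the period lattice of the model. By Theorem~\ref{theo:M-Z}, a regular leaf $L$ of $\cQ_\Bbbk$ near the zero-torus is diffeomorphic to $\TT^{n-1}\times\RR$, where the $\TT^{n-1}$ is the joint orbit of the $2\pi$-periodic flows of $\check Q_\Bbbk$ and the $\RR$ is the orbit of $X_{f^1}$, whose restriction to the focus--focus plane is the unbounded hyperbolic flow $(x,\xi)\mapsto(e^{-t}x,e^{t}\xi)$. I would prove that the time-$t$ flow of $\sum_j t_j X_{(Q_\Bbbk)_j}$ restricts to the identity on $L$ if and only if the $f^1$-component of $t$ vanishes and the other $n-1$ components lie in $2\pi\ZZ^{n-1}$. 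The ``only if'' part uses that scaling and rotation act non-trivially on the annulus swept inside $\cU_m$ by the $(f^1,f^2)$-coordinates, together with the fact that the effective $\TT^{n-1}$-action generated by $\check Q_\Bbbk$ is free on the dense principal orbit type, so that on a generic regular leaf the period lattice is \emph{exactly} $2\pi\ZZ^{n-1}$. Establishing this exactness — ruling out a strictly finer lattice — rather than a mere inclusion is the step I expect to require the most care; everything else is bookkeeping with the normal form.

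Next, differentiating assertion (1) gives $X_{f_i\circ\varphi^{-1}}=\sum_j(\partial_jG_i)(Q_\Bbbk)\,X_{(Q_\Bbbk)_j}$, and on a regular leaf $L=\{Q_\Bbbk=q\}$ the coefficients are the constants $(\partial_jG_i)(q)$. For $i\geq 2$ the flow of $X_{f_i}$ is $2\pi$-periodic on $\cU_m$ (the components $\check F$ generate the $\TT^{n-1}$-action of the semi-toric structure, and the symplectomorphism $\varphi$ sends $X_{f_i}$ to $X_{f_i\circ\varphi^{-1}}$), hence $2\pi\big(\partial_jG_i(q)\big)_j$ lies in the period lattice computed above. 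This forces the $f^1$-partial derivatives of $G_i$ to vanish for $i\geq 2$ (so $\check G$ depends only on the $\check Q_\Bbbk$-variables) and makes the Jacobian block $B$ of $\check G$ in those variables integer-valued; by continuity over the dense set of regular leaves, $B\in\Mat_{n-1}(\ZZ)$ is constant, and since $DG$ is invertible and, by the previous point, block lower-triangular, $\det DG=\partial_{f^1}G_1\cdot\det B$ shows $B$ is invertible. Therefore $\check G$ is affine, $\check G=A\cdot(\,\cdot\,)+c'$ with $A:=B\in\Mat_{n-1}(\ZZ)$, and using the independence of $f^1$ we get $\check F\circ\varphi^{-1}=\check G\circ Q_\Bbbk=A\,\check Q_\Bbbk+c'$.

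Finally, $A\in\GL_{n-1}(\ZZ)$: both $\check F\circ\varphi^{-1}$ and $\check Q_\Bbbk$ are momentum maps of effective Hamiltonian $\TT^{n-1}$-actions on $\varphi(\cU_m)$ (effectiveness of the $\check F$-action persists on $\cU_m$ since $\cU_m$ meets the free principal orbit type), and because $A$ is a real invertible matrix the fields $X_{(\check F\circ\varphi^{-1})_i}$ and $X_{(\check Q_\Bbbk)_j}$ span the same distribution, so the two actions have the same orbits; for an integer matrix $A$ the reparametrised action generated by $A\,\check Q_\Bbbk$ is effective only if $|\det A|=1$ (its kernel has order $|\det A|$). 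Hence $A\in\GL_{n-1}(\ZZ)$, and evaluating at $m$, where $\varphi(m)=0$, $Q_\Bbbk(0)=0$ and $F(m)=c$, identifies $c'=\check F(c)$, which is assertion (2).
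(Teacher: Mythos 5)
Your proposal is correct and follows essentially the same route as the paper: assertion (1) is quoted from Lemma~\ref{lemma:exist_G}, and assertion (2) is obtained by expanding $\chi_{f_i\circ\varphi^{-1}}$ in the model vector fields and using the $2\pi$-periodicity of the $\check F$-flows against the explicit flows of $Q_\Bbbk$ (hyperbolic in $q_1$, rotations/translations in the rest) to force $\partial_{q_1}G_i=0$ and integrality, hence constancy by connectedness — your period-lattice computation is exactly the paper's explicit flow formula in different clothing. The only divergence is the final step $\det A=\pm1$, which you derive from effectiveness of the two torus actions while the paper invokes $2\pi$-periodicity of $\check F$ to get $A^{-1}\in M_{n-1}(\ZZ)$; both rest on the same implicit assumption that the $\TT^{n-1}$-action of the semi-toric structure is effective, and your version is if anything the more explicit about it.
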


In the future, we shall dinstinguish different parts of $A$, so we shall write the Jacobian matrix of $G$ as:

\[ \begin{pmatrix}
   \partial_{q_1} \! G_1 & \partial_{q_2} \! G_1 & \partial_{q^{(1)}_e} \! G_1 & \ldots & \!\! \partial_{q^{(k_{\ue })}_\ue} \! G_1 & \partial_{I_1} \! G_1 & \ldots & \partial_{I_{\kx}} \! G_1 \\
   0 & F^\uf  & F^\ue_1 & \ldots & F^\ue_{k_{\ue }} & F^\ux_1 & \ldots & F^\ux_{\kx} \\
   0 & E^\uf_1 & \ulcorner & \; & \urcorner & \ulcorner & \; & \urcorner \\
   \vdots & \vdots & \; & E^\ue & \; & \; & E^\ux & \; \\
   \vdots & E^\uf_{k_{\ue }} & \llcorner & \; & \lrcorner & \llcorner & \; & \lrcorner \\
   \vdots & X^{\uf}_1 & \ulcorner & \; & \urcorner & \ulcorner & \; & \urcorner \\
   \vdots & \vdots & \; & X^\ue & \; & \; & X^\ux & \; \\
   0 & X^{\uf}_{\kx} & \llcorner & \; & \lrcorner & \llcorner & \; & \lrcorner \\
   \end{pmatrix}   ( \spadesuit ).
\]

Put in another way, \[ A = \begin{pmatrix} F^\uf & F^\ue & F^\ux \\ E^\uf & E^\ue & E^\ux \\ X^{\uf} & X^\ue & X^{\ux} \end{pmatrix} ,\] with:
\begin{itemize}
 \item[$\bullet$]  $F^\uf \in\ZZ$, $F^\ue \in \ZZ^{1 \times \ke}$, $F^\ux \in \ZZ^{1 \times \kx} $,
 \item[$\bullet$]  $E^\uf \in \ZZ^{\ke \times 1} $, $E^\ue \in \ZZ^{\ke \times \ke}$, $E^\ux \in \ZZ^{\ke \times \kx} $,
 \item[$\bullet$]  $X^{\uf} \in M_{\kx \times 1} (\ZZ) $, $X^\ue \in M_{k_{\ue }} (\ZZ) $, $X^{\ux} \in M_{\kx} (\ZZ) $.
\end{itemize}

Note that this theorem includes the particular case where $F= Q_\Bbbk$, that is, when we consider symplectomorphisms that start and end with $Q_\Bbbk$.


\begin{proof}[Proof of Theorem]\ref{theo:pres_semi-toric_G}

Since $\varphi$ is a symplectomorphism, it preserves the dynamics induced by the Hamiltonian vector fields of $\check{F}$. The open set is alwas connected. We have assumed that the components of $\check{F}$ have $2\pi$-periodic flows. So, once pushed forward by $\varphi$, the vector fields must remain $2\pi$-periodic. We have the expression

\[ \chi_{f_i \circ \varphi^{-1}} = \partial_{q_1} G_i .\chi_{q_1} + \partial_{q_2} G_i .\chi_{q_2} + \sum_{j=1}^{k_{\ue }} \partial_{q_{(j)}^\ue} G_i .\chi_{q_{(j)}^\ue} + \sum_{j=1}^{\kx} \partial_{I_j} G_i .\chi_{I_j}
\]

The partial derivatives of $G$ are constant under the action by the Hamiltonian flow of $Q_\Bbbk$. The classical complex variables of singular flows are the following 

\[
\begin{array}{||c||c||c||}
 \hline
 \text{Type} & \text{Elliptic} &\text{Focus-Focus} \\
 \hline
 \text{Coordinates} & z^\ue:= x^\ue + i \xi^\ue & \begin{cases}
                      z^{\uf_1}:= x^\uf_1 + i x^\uf_2 \; , \; z^{\uf_2}:= \xi^\uf_1 + i \xi^\uf_2 \\
                      q^\uf:= \bar{z}^{\uf_1} z^{\uf_2}:= q^{\uf_1} + i q^{\uf_2}
                      \end{cases} \\ 
 \hline
 \text{Formula} & \phi_{q^\ue}^t (z^\ue) = e^{iq^\ue t} z^\ue 
 
 &                    \begin{cases}
                      \phi^t_{\uf_1} (z_1,z_2):= (e^{- q^{\uf_1} t} z_1, e^{ q^{\uf_2} t} z_2) \\
                      \phi^t_{\uf_2} (z_1,z_2):= (e^{ i q^{\uf_2} t} z_1, e^{ i q^{\uf_2} t} z_2)
                      \end{cases}
\\
\hline
\end{array}
\]

From this we deduce the expression of the flow on a neighborhood $\cU$:
\[
\begin{aligned}
& \phi^t_{f_i \circ \varphi^{-1} } \left(z_1,z_2,{\bold z^\ue},\Btheta,\bI \right) = \\ & \text{\Big{(}} e^{( \partial_{q_1} G_i + i \partial_{q_2} G_i)t } z_1, e^{(- \partial_{q_1} G_i + i \partial_{q_2} G_i)t } z_2, e^{i \partial_{{\bold q}^\ue} G_i .t} \bullet {\bold z}^\ue, \Btheta + \partial_{\bI}G_i.t,,\bI \text{\Big{)}}. \\
\end{aligned}
\]
So necessarily, for $i =2,\ldots,n$ and $j =1,\ldots,n$, we have $\partial_{q_1} G_i = 0$ and 

\[ \partial_{q_2}G_i , \partial_{q_{j}^\ue}G_i \in \ZZ \text{ and } \partial_{I_{\kx}}G_i \in \ZZ .\]

If a coefficient of the Jacobian is integer on $\varphi (\cU)$, it must be constant on it. This shows that $ \check{F} = A \circ \check{Q}$ with $A \in M_{n-1} (\ZZ)$. Now, $A$ is invertible because $G$ is a local diffeomorphism, and since the components of $\check{F}$ are all $2\pi$-periodic, we have that necessarily $A^{-1} \in M_{n-1} (\ZZ)$.

\end{proof}

Note that this result here only uses the $2\pi$-periodicity of the flow, and no other assumption about the dynamics of $F$. In the next theorem, we have the \emph{same} foliation before and after composing with $\varphi$. This stronger statement will get us precisions about the form of $Jac(G)$, in particular the unicity of its infinite jet on the set of critical values.

\subsection{Transition functions between the semi-toric local models}

In this section, we need to precise our notion of flat function. To this end, let's introduce the following set:

\begin{definition}
 Let $S = \{ (p,v) | v \in T_p M \}$ a subset of the tangent bundle $TM \xrightarrow{\pi} M$ and $\cU$ an open subset of $M$. We define the set $\flat_S (\cU)$ as the set of real-valued smooth functions on $\cU$, such that for each point $p \in \pi(S)$ its infinite jet in the direction $v$ is null (i.e. the jet at any order is null).
\end{definition}

This definition is voluntary general, as we shall consider smooth function that are possibly flat in some directions at one point, in different direction at other points etc. 

\subsubsection{ Symplectomorphisms preserving a linear semi-toric foliation }

We prove here a kind of \emph{uniqueness} theorem of the $G$ introduced in Theorem~\ref{theo:pres_semi-toric_G}. Here, we write the diffeomorphism $B$, as the constraints of $G$ in that case is an information about the possible changes of the \emph{Basis} in the ``variables'' $Q_\Bbbk$ that can occur.

\begin{theorem} \label{theo:pres_trans_B}
 Let $(L_\Bbbk,\omega_\Bbbk,Q_\Bbbk)$ be a singularity in Zung's theory with $k_{\uf} =1$. Let $\psi$ be a symplectomorphism of $\cU \subset L_\Bbbk$ which preserves the foliation $\cQ_\Bbbk$.

 Then the diffeomorphism $B: U \to B(U)$ introduced in~\ref{theo:pres_semi-toric_G} is such that there exists $\epsilon^{\uf}_1,\epsilon^{\uf}_2 \in \{ -1,+1 \}$, a matrix $\Beps^\ue \in \Diag_{\ke} (\{-1,+1\}) $ and a function $u \in \flat_S (U)$, where $S:= TM|_{V_\Bbbk (\cU)}$, so that we have:
 
 \begin{enumerate}
  \item $ Jac(B)_1 (Q_\Bbbk) = (\epsilon^{\uf}_1 q_1 + \partial_{q_1}u, \partial_{q_2}u, 0,\ldots,0)$, 
  \item $E^\uf = 0$, $E^\ue = \Beps^\ue$ and $E^\ux = 0 $,
  \item $F^\uf = \epsilon^{\uf}_2$, $F^\ue =0$ and $F^\ux = 0$.
  \item $X^{\uf} \in M_{\kx,1} (\ZZ) $, $X^\ue \in M_{k_{\ue }} (\ZZ) $, $X^{\ux} \in GL_{\kx} (\ZZ) $,
 \end{enumerate}
 
 That is, for $\tilde{x} = x \circ \psi^{-1}$ : 

\begin{enumerate}
 \item[a.] $\tilde{q_1} = \epsilon^{\uf}_1 q_1 + u$ , $\tilde{q_2} = \epsilon^{\uf}_2 q_2$
 \item[b.] ${\tilde{q}}^{(i)}_e= \epsilon^\ue_i q^{(i)}_e$
 \item[c.] $(\tilde{I}_1,\ldots,\tilde{I}_{\kx}) = \check{Q_\Bbbk}$ 
\end{enumerate}

For the Jacobian, it means that  

\[ Jac(B) = 
\begin{pmatrix}
   \epsilon^{\uf}_1 + \partial_{q_1} u & \partial_{q_2} u & \ldots & \ldots & \ldots & \ldots & \ldots & \partial_{e_n} u \\
    0 & \epsilon^{\uf}_2 & 0 & \ldots & 0 & 0 & \ldots & 0 \\

    0 & 0 & \epsilon^\ue_1 & \ddots & \vdots & 0 & \ldots & 0 \\
   \vdots & \vdots & \ddots & \ddots & 0 & \vdots & \ldots & \vdots \\
   0 & 0 & \ldots & 0 & \epsilon^\ue_{k_{\ue }} & 0 & \ldots & 0 \\

   \vdots & X^\uf_1 & \ulcorner & \; & \urcorner & \ulcorner & \; & \urcorner \\
   \vdots & \vdots & \; & X^\ue & \; & \; & X^\ux & \; \\
   0 & X^\uf_{\kx} & \llcorner & \; & \lrcorner & \llcorner & \; & \lrcorner \\
   \end{pmatrix} ( \bigstar )
\]
with the $u_i$'s being in $\flat_S ( \cU )$ (the $u_i$'s are the derivates of $u$ in all the variables).

\end{theorem}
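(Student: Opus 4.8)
The plan is to bootstrap from Theorem~\ref{theo:pres_semi-toric_G}, which already applies in the special case $F=Q_\Bbbk$: it provides a local diffeomorphism $B$ of $(\RR^n,0)$ with $Q_\Bbbk\circ\psi^{-1}=B\circ Q_\Bbbk$ whose Jacobian has the shape $(\spadesuit)$, so that the last $n-1$ components of $B$ do not depend on $q_1$ and the matrix $A$ with the block structure of $(\spadesuit)$ lies in $GL_{n-1}(\ZZ)$ with $A^{-1}\in M_{n-1}(\ZZ)$. What remains is to pin down the individual blocks $F^\bullet, E^\bullet, X^\bullet$ of $A$ and the first row of $Jac(B)$. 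The guiding idea is that $\psi$ preserves the \emph{foliation} $\cQ_\Bbbk$, not merely the $\TT^{n-1}$-action it carries, hence it respects the whole Williamson stratification of $L_\Bbbk$ together with the geometric data attached to each stratum; this is what upgrades the bare integrality of $A$ to the rigid block form claimed.

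First I would treat the elliptic and the periodic focus--focus directions together, by a fixed-point analysis of the $S^1$-subactions of the $\TT^{n-1}$-action. Each $q^{(i)}_e$ generates a Hamiltonian $S^1$-action whose fixed locus $W_i\subset L_\Bbbk$ is a codimension-two symplectic submanifold, while the set $\{z_1=z_2=0\}$ of points carrying a focus--focus block is exactly the codimension-four fixed locus of the $q_2$-action; all of these are foliation invariants, so $\psi$ maps each of them to a set of the same kind, and after the obvious relabelling of the elliptic blocks of the normal form we may assume $\psi(W_i)=W_i$ and $\psi(\{z_1=z_2=0\})=\{z_1=z_2=0\}$. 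Among the integer combinations of $q_2,q^{(1)}_e,\dots,q^{(\ke)}_e,I_1,\dots,I_\kx$ that generate an $S^1$-action, the ones with a \emph{nonempty} fixed locus carry no $\bI$-component; among those, the ones whose fixed locus has codimension exactly two are $\pm q^{(i)}_e+\mathrm{const}$, and the ones whose fixed locus is all of $\{z_1=z_2=0\}$ are $\pm q_2+\mathrm{const}$ (any $q_2$-contribution drags the fixed set into the focus--focus stratum, and any further elliptic contribution cuts it down by another two). Since $\tilde q^{(i)}_e:=q^{(i)}_e\circ\psi^{-1}$ and $\tilde q_2:=q_2\circ\psi^{-1}$ are such momentum maps, vanishing on $W_i$, resp.\ on $\{z_1=z_2=0\}$, we get $\tilde q^{(i)}_e=\epsilon^\ue_i q^{(i)}_e$ with $\epsilon^\ue_i\in\{-1,+1\}$ and $\tilde q_2=\epsilon^\uf_2 q_2$ with $\epsilon^\uf_2\in\{-1,+1\}$, that is $E^\uf=0$, $E^\ue=\Beps^\ue$, $E^\ux=0$, $F^\uf=\epsilon^\uf_2$, $F^\ue=0$, $F^\ux=0$. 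The matrix $A$ is then block lower-triangular with diagonal blocks $\epsilon^\uf_2$, $\Beps^\ue$ and $X^\ux$, so $\det X^\ux=\pm\det A=\pm1$, whence $X^\ux\in GL_\kx(\ZZ)$; the blocks $X^\uf$ and $X^\ue$ stay arbitrary integer matrices, no extra rigidity being forced along the free $\bI$-directions.

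It remains to control the non-periodic component $q_1$ and to extract the flat remainder. Composing $\psi$, if necessary, with the model symplectomorphism $(z_1,z_2)\mapsto(\bar z_1,\bar z_2)$, which fixes $q_1$, reverses $q_2$ and permutes the leaves, we may assume $\epsilon^\uf_2=+1$, so that $\psi$ commutes with the $q_2$-circle action. By Theorem~\ref{theo:A-L_with_sing} we are, up to leaf-wise diffeomorphism, working in the product model, and reducing the focus--focus $\RR^4$-factor by this $S^1$-action turns $\psi$ into an area-preserving diffeomorphism $\bar\psi$ of a two-dimensional reduced symplectic space that preserves the level sets of the reduced function $\bar q_1$; by Assumption~\ref{assum:simple_top_stable} the focus--focus leaf has a single pinch, so $\bar q_1$ has a single nondegenerate critical point of saddle type, i.e.\ is $\simeq uv$. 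By Lemma~\ref{lemma:exist_G}, $\bar\psi$ acts on the values of $\bar q_1$ through a germ of diffeomorphism $g$ of $(\RR,0)$; the isochore Morse lemma of Colin de Verdi\`ere and Vey \cite{ColinVey-LemmeMorseIsochore-1979}, together with the fact that the symplectic area trapped between two level sets of $\bar q_1$ carries a logarithmic term, forces $g(s)=\epsilon^\uf_1 s+v(s)$ with $v$ flat at $0$. Pulling this back and reassembling with the components already computed yields $\tilde q_1=\epsilon^\uf_1 q_1+u$ with $u$ flat at every point of $V_\Bbbk(\cU)=\{q_1=q_2=0,\ q^{(1)}_e=\dots=q^{(\ke)}_e=0\}$, i.e.\ $u\in\flat_S(U)$ with $S=TM|_{V_\Bbbk(\cU)}$; the first row of $Jac(B)$ is then the row of partial derivatives of $\epsilon^\uf_1 q_1+u$, namely $(\epsilon^\uf_1+\partial_{q_1}u,\partial_{q_2}u,\dots)$ with all the $\partial_\bullet u$ again flat along $V_\Bbbk(\cU)$, which is precisely the top row of $(\bigstar)$; the remaining rows were identified above.

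The main obstacle is this last step: beyond the bookkeeping needed to make the $S^1$-reduction and the identification of the reduced singularity precise, the genuine content is the flatness of $u$, which is the symplectic (monodromy) manifestation of the logarithmic behaviour of the focus--focus action variable. In contrast, the rigidity of the elliptic and periodic focus--focus blocks is essentially lattice- and representation-theoretic once one knows the Williamson stratification is preserved.
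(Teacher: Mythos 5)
Your treatment of items (2)--(4) is essentially the paper's own argument in different clothing: the paper also pins these blocks down by a fixed-point analysis, evaluating the pushed-forward Hamiltonian vector fields at points where exactly one of $\chi_{q^\ue_i}$ (resp.\ $\chi_{q_2}$) vanishes and using the linear independence of the remaining fields, together with the integrality and invertibility of $A$ already supplied by Theorem~\ref{theo:pres_semi-toric_G} to get $X^{\ux}\in GL_{\kx}(\ZZ)$. That part of your proposal is sound (modulo the relabelling of elliptic blocks, which both you and the statement of the theorem treat as a convention).

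The genuine gap is in your last step, the flatness of $u$. First, the reduced space of the focus--focus factor at level $q_2=0$ is not a smooth surface: the $S^1$-action generated by $q_2$ fixes the origin, so the reduction is an orbifold cone ($\RR^2/\{\pm 1\}$ with $\bar q_1\simeq uv$), and the isochore Morse lemma by itself does not give what you want --- the needed input is the classification of reparametrizations realizable by area-preserving maps, a separate fact whose proof is essentially the regularized-period computation you are trying to avoid. Second, and more seriously, even granting that step you only learn that $s\mapsto B_1(s,0,\mathbf{0},\bI)-\epsilon^\uf_1 s$ is flat at $s=0$: a one-variable statement on the slice $q_2=0$. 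The theorem requires $u=B_1-\epsilon^\uf_1 q_1$ to be flat at every point of $V_\Bbbk(\cU)$ in \emph{all} directions, in particular jointly in $(q_1,q_2)$, and your reduction never sees the $q_2$-dependence of $B_1$ (reducing at a level $q_2=\epsilon\neq 0$ yields a regular reduced function and hence no information). This two-variable flatness is precisely where the complex-logarithmic nature of the focus--focus singularity enters; the paper obtains it via the joint-flow argument of \SVN: the map $\Upsilon=\phi^{s}_{q_1}\circ\phi^{t}_{q_2}$ with $s=\ln\abs{\delta/c}$ and $t=\arg(\delta)-\arg(c)$ is smooth across $\{z_1=0\}$, and comparing $\psi^{-1}\circ\Upsilon\circ\psi$ with the joint flow of $B\circ Q_\Bbbk$ forces the factor $e^{(1-\partial_1 B_1)\ln\abs{c}+i(-\partial_2 B_1\ln\abs{c}-\arg(c))}$ to be smooth at $c=0$, whence $1-\partial_1 B_1$ and $\partial_2 B_1$ are flat in $(q_1,q_2)$ simultaneously. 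You would need either to reproduce that argument or to supply a genuinely two-dimensional substitute; the planar-saddle reduction alone does not suffice. You also skip the intermediate step showing $\partial_{I_j}B_1$, $\partial_{q^\ue_j}B_1$ and $\partial_{q_2}B_1$ vanish on $V_\Bbbk(\cU)$, which the paper derives from the absence of fixed points of the $\chi_{I_j}$ and from the confinement of infinite-time $q_1$-trajectories to sectors of the stable and unstable manifolds.
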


For practical uses, we are not interested in the precise form of $u$, but just by the fact that it is flat on $S$ as defined above.

The theorem was proved in the focus-focus case for $2n=4$ by \SVN in~\cite{SVN-semiglobalinvariants-2003}. We follow the same ideas to give here a proof in the general case.

\begin{proof}[Proof of Theorem]\ref{theo:pres_trans_B}

As a particular case of theorem~\ref{theo:pres_semi-toric_G}, we already know that $Jac(B)$ is of the form $(\spadesuit)$. The point here is to exploit the fact that the linear model $Q_\Bbbk$ has specific dynamical features conserved by a canonical transformation. Note also that the fibers of $Q_\Bbbk$ are connected.

{ \bf Points (2), (3) and (4):}

A point fixed by the flow of a Hamiltonian $H$ is preserved by a symplectomorphism: its image will be a fixed point for the precomposition of $H$ by a symplectomorphism. Theorem~\ref{theo:M-Z} tells us how critical loci of a given Williamson type come as ``intersections'' of other critical loci. In particular, we know that for each $i=1,\ldots,k_{\ue } $, there exists a $c_i \in U$ such that for all $ p \in (Q_\Bbbk)^{-1} (c_i) \cap \cU$, $ \chi_{q^\ue_i} (p) = 0$, $\chi_{q^\ue_{j}} (P) \neq 0$ for $j\neq i$ and $\chi_{q_2} (p) \neq 0$ ( the $\chi_I$'s never cancel). Now, we have the following formula
\[ \chi_{q^\ue_{i} \circ \psi^{-1}}(p) = E^\uf_i \chi_{q_2}(p) + \sum_{j=1 , j \neq i}^{k_\ue } E^\ue_{ij} \chi_{q^\ue_j} (p) + \sum_{j=1}^{k_{\uf}} E^\ux_{ij} \chi_{I_j} (p)= 0 .\]

The family

\[ (\chi_{q_2} (p),\chi_{q^{(1)}_e} (p),\ldots,\chi_{q^{(i-1)}_e} (p),\chi_{q^{(i+1)}_e} (p),\ldots,\chi_{q^{(k_{\ue })}_e} (p),\chi_{I_1},\ldots,\chi_{I_{\kx}}) \]
is a free family ($Q_\Bbbk$ is an integrable system), so we have for $j \neq i$ that $E^\ue_{ij} = 0 $, and $E^x_{ij} = 0$ for all $j=1,\ldots,\kx$. This is true for all $i=1,\ldots,k_{\ue } $.

If we take a $c$ in $\{ q_1 = q_2 = 0 \}$, we get by a similar reasoning that $F^\ue = 0$ and $F^\ux = 0$. 


\ouf

{\bf Point (1), part 1: }

Now, we need to deal with the flow of $q_1$, that is, the ``pseudo-hyperbolic'' component of the focus-focus singularity.

On an open set $\cU$, we have the explicit expression for the field of $q_1 \circ \psi^{-1}$

\[ \chi_{q_1 \circ \psi^{-1}} = \partial_{q_1} B_1 \cdot  \chi_{q_1} + \partial_{q_2} B_1 \cdot  \chi_{q_2} + \sum_{j=1}^{k_{\ue }} \partial_{q^{(j)}_e} B_1 \cdot  \chi_{q^{(j)}_e} + \sum_{j=1}^{\kx} \partial_{I_j} B_1 \cdot  \chi_{I_j}
 .\]

When evaluated on $p \in \overline{P^{Q_\Bbbk}_\Bbbk (\cU) } $, comes 

\[0 = \partial_{q_2} B_1 \cdot  \chi_{q_2} (p) + \sum_{j=1}^{k_{\ue }} \partial_{q^\ue_j} B_1 \cdot  \chi_{q^\ue_j} (p) + \sum_{j=1}^{\kx} \partial_{I_j} B_1 \cdot  \chi_{I_j} (p)
 .\]

First, since the $\chi_{I_i}$'s have no fixed point, we necessarily have that 

\begin{equation} \label{equ:X_component_q1_zero_CrV_Bbbk}
 \forall c \in V^{Q_\Bbbk}_\Bbbk (\cU) \text{ and } \forall j=1,\ldots,\kx \ ,\  \partial_{I_j} B_1 (c) = 0. 
\end{equation}

The result is true for all $\Bbbk$ with $\kf =1$. If $k_\ue \geqslant 1$, we can apply the same reasoning to $V_{\Bbbk'} (\cU)$, where $\Bbbk'=(0,k_{\uf},0,\kx+k_{\ue}) \succcurlyeq \Bbbk$ and get equation~\ref{equ:X_component_q1_zero_CrV_Bbbk} for this set. Since $V^{Q_\Bbbk}_\Bbbk (U) \subseteq \overline{V^{Q_\Bbbk}_{\Bbbk} (U)}$, we have

\begin{equation} \label{equ:E_component_q1_zero_CrV_Bbbk}
 \forall c \in V^{Q_\Bbbk}_{\Bbbk} (U) \text{ and } \forall j=1,\ldots,k_{\ue } \ ,\  \partial_{q^\ue_j} B_1 (c) = 0 .
\end{equation}

Now that we know there is no transverse nor elliptic component in the flow of $q_1 \circ \psi^{-1}$ for critical leaves with focus-focus component, let's focus on the $q_2$-component. 

A leaf $\Lambda'$ of $\cQ_\Bbbk$ of Wiliamson type $\Bbbk' \preccurlyeq \Bbbk$ is stable by the flow of $q_1$. On it, the flow is radial: for a point $m' \in \Lambda$ of Williamson type $\Bbbk' =(0,1,\kx)$, there exists a unique point $m$ on the zero-torus of the leaf $\Lambda$ such that the segment $[m',m[$ is a trajectory for $q_1$. Depending on whether $m'$ is on the stable (+) or the unstable (-) manifold, we have that $[m',m[ = \{ \phi^{\pm t}_{q_1}(m') \ | \ t \in [0,\infty[ \}$. Remembering that $m$ is a fixed point for $q_1$, we have that $\psi([m',m[)$ is be a trajectory of $q_1 \circ \psi^{-1} =  B_1 \circ Q_\Bbbk $.

The image trajectory $\psi([m',m[)$ is contained in a 2-dimensional plane (the stable or unstable manifold). Since $\psi$ is smooth at $m'$, $\psi([m',m[)$ is even contained in a sector of this plane that also contains $m'$. Remembering that $\psi([m',m[)$ is a trajectory \emph{ for an infinite time}, the only linear combinations of $\chi_{q_1}$, $\chi_{q_2}$ which yields trajectories confined in a fixed sector are multiples of $\chi_{q_1}$ only. So we have that

\begin{equation} \label{equ:q2_component_q1_zero_CrV_Bbbk}
  \partial_{q_2} B_1 (c) = 0 .
\end{equation}

This shows that $B_1$ is constant in the variables $(\Bq^\ue,\bI)$ on $V_\Bbbk (U)$, but does not tell us more information.

{ \bf Point (1), part 2: }

To show that $B_1-q_1$ is flat on $V_\Bbbk (U)$ in the variables $q^\uf_1$ and $q^\uf_2$, we can now treat the variables $(\Bq^\ue,\bI)$ as parameters. We can always suppose that $\psi$ preserves the stable and unstable manifolds of $q_1$: this amounts to fix the sign of $\partial_1 B_1$ to be positive on $\cU$. As a result we'll have $\epsilon^{\uf}_1 =1$. And again, we can assume that $k_{\ue } =0$, as flatness is a closed property: here, it is stable when taking the limit $\bold{q}^\ue \to 0$.

With the explicit expression of the flow of $q_1$ and $q_2$, if we set $\bar{z_1} z_2 = c$ and $z_2 = \bar{\delta}$, we have for the joint flow of $q_1$ and $q_2$ at respective times $s = \ln \abs{\frac{\delta}{c}}$ and $t= \arg(\delta) - \arg(c)$

\begin{equation} \label{equ:jointflow}
\underbrace{\phi_{q_1}^s \circ \phi_{q_2}^t}_{ =: \Upsilon} (c,\bar{\delta},\Btheta,\bI) = (\delta,\bar{c},\Btheta,\bI)
\end{equation}

One can then state the fact that $\Upsilon$ is a smooth and single-valued function in a neighborhood $W$ containing $\{ (0,\bar{\delta},\Btheta,\bI) , \Btheta \in \TT^{\kx} , \bI \in \cB^{\kx} (0,\eta) \}$. Now, we know that $\psi^{-1} (0,\bar{\delta},\Btheta,\bI) $ is of the form $(0,a,\Btheta',\bI)$ and $\psi^{-1} (\delta,0,\Btheta,\bI) $ is of the form $(b,0,\Btheta'',\bI)$, since $\psi$ preserve the level sets and the stable and unstable manifolds. Hence, for $\psi^{-1} \circ \Upsilon \circ \psi$

\[ (0,a,\Btheta',\bI)  \overset{\psi}{\mapsto} (0,\bar{\delta},\Btheta,\bI)  \overset{\Upsilon}{\mapsto} (\delta,0,\Btheta,\bI) \overset{\psi^{-1}}{\mapsto} (b,0,\Btheta'',\bI) .\]

With the expression of $\Upsilon$ in~\eqref{equ:jointflow}, we know that in the complementary set of $\{z_1 = 0\}$, $\psi^{-1} \circ \Upsilon \circ \psi$ is equal to the joint flow of $B(q_1,q_2,\bI)$ at the multi-time $(ln \abs{\frac{\delta}{c}},arg(\delta) - arg(c),0,\ldots,0)$. With what we already know about the flow of $q_2 \circ \psi^{-1}$, when we write the joint flow in terms of the flows of the components of $Q_\Bbbk$ we get:

\[
\begin{aligned}
 \phi_{B(q_1,q_2,\bI)}^{(ln \abs{\frac{\delta}{c}},arg(\delta) - arg(c))} & = \phi_{\partial_1  B_1 \cdot q_1 + \partial_2  B_1 \cdot q_2 + \sum_j \partial_{I_j}  B_1 \cdot  I_j}^{ ln \abs{\frac{\delta}{c}} }  \circ \phi_{q_2}^{arg(\delta) - arg(c)} \\
 & = \phi_{q_1}^{\partial_1  B_1 \cdot  ln \abs{\frac{\delta}{c} }} \circ \phi_{q_2}^{ \partial_2  B_1 \cdot  ln \abs{\frac{\delta}{c}} + arg(\delta) - arg(c) } \\ 
 \; &  \hspace{35mm} \circ \underbrace{\phi_{I_1}^{ - \partial_{I_1} B_1 \cdot  ln \abs{ \frac{c}{\delta}} } \circ \ldots \circ \phi_{I_{\kx}}^{ - \partial_{I_{\kx}} B_1 \cdot  ln \abs{ \frac{c}{\delta}} } }_{ = id \text{ (c.f. Point 4.(1))}}
\end{aligned}
.\]

Since $\psi^{-1} \circ \Upsilon \circ \psi$ is smooth at the origin, it's also smooth in a neighborhood of the origin; for $c$ small enough, we can look at the first component of the flow on $(c,a,\Btheta,\bI)$: here $c$ shall be the variable while $a$, $\Btheta$ and $\bI$ are parameters. We have the application

\[
\begin{aligned}
c \mapsto & e^{\partial_1  B_1 \cdot  ln \abs{\frac{\delta}{c}} + i ( \partial_2  B_1 . ln \abs{\frac{\delta}{c}} + arg(\delta) - arg(c) ) } c \\
 = & \left[ e^{\partial_1  B_1 ln|\delta| + i ( \partial_2  B_1 ln|\delta| + arg(\delta) )} \right] e^{(1-\partial_1  B_1) \cdot ln|c| + i ( - \partial_2  B_1 ln |c| - arg(c) )} .
\end{aligned}
\]

The terms in brackets are obviously a smooth function of $c$, and so the last exponential term is also smooth as a function of $c$ on 0. Hence, the real part and the imaginary part are both smooth functions of $(c_1,c_2)$ in $(0,0,\bI)$. 

We then have the following lemma:

\begin{lemma}
 Let $f \in \mathcal{C}^{\infty}(\RR^k \to \RR)$ be a smooth function such that: $x \mapsto f(x)~ ln\parallel\!x\!\parallel$ is also a smooth function.

 Then $f$ is necessarily flat in 0 in the $k$ variables $(x_1,\ldots,x_k)$.
\end{lemma}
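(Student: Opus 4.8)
The plan is to argue by contradiction using the leading Taylor jet of $f$ at the origin. Suppose $f$ is \emph{not} flat at $0$. Then there is a smallest integer $m\geq 0$ such that some partial derivative of $f$ of order $m$ does not vanish at $0$, and the homogeneous degree-$m$ part of the Taylor expansion, $P_m(x)=\sum_{|\alpha|=m}\frac{1}{\alpha!}\partial^\alpha f(0)\,x^\alpha$, is a nonzero polynomial. I would fix once and for all a vector $v\in\RR^k$ with $P_m(v)\neq 0$ (possible since $P_m\not\equiv 0$), and then test the hypothesis against the restriction of $g(x):=f(x)\,\ln\norm{x}$ to the line $t\mapsto tv$.

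Set $h(t):=g(tv)$. Since $g$ is assumed smooth on a neighborhood of $0\in\RR^k$, the one-variable function $h$ is smooth near $0\in\RR$; in particular $h$ is of class $C^m$, so $h^{(m)}$ is continuous and hence bounded near $0$. On the other hand, writing $\phi(t):=f(tv)$, Taylor's formula with remainder gives $\phi(t)=P_m(v)\,t^m+t^{m+1}\eta(t)$ with $\eta$ smooth (the Taylor coefficients of $\phi$ of order $<m$ vanish by minimality of $m$, and $\phi^{(m)}(0)=m!\,P_m(v)$). Using $\ln\norm{tv}=\ln|t|+\ln\norm{v}$, this yields
\[ h(t)=P_m(v)\,t^m\ln|t| \;+\; \big(P_m(v)\ln\norm{v}\big)t^m \;+\; t^{m+1}\eta(t)\ln|t| \;+\; \big(\ln\norm{v}\big)t^{m+1}\eta(t). \]
The last three summands are of class $C^m$ at $0$: the second and fourth are smooth, and $t^{m+1}\ln|t|$ extends to a $C^m$ function (all of its derivatives up to order $m$ tend to $0$ at the origin), so its product with the smooth $\eta$ is again $C^m$. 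The first summand, however, is \emph{not} $C^m$: an elementary induction gives $\frac{d^m}{dt^m}\big(t^m\ln|t|\big)=m!\,\ln|t|+c$ for a constant $c$, which is unbounded as $t\to 0$, while $P_m(v)\neq 0$. Hence $h^{(m)}(t)=m!\,P_m(v)\ln|t|+O(1)$ near $0$, contradicting boundedness of $h^{(m)}$. Therefore $f$ is flat at $0$. (The case $m=0$, where $f(0)\neq 0$, is subsumed: there $h$ already fails to be continuous at $0$.)

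The only genuinely delicate point I anticipate is the bookkeeping that isolates $P_m(v)\,t^m\ln|t|$ as the unique obstruction — that is, checking that the Taylor remainder $t^{m+1}\eta(t)$ contributes, after multiplication by $\ln|t|$, only a $C^m$ term. This is precisely what forces one to extract a genuine factor $t^{m+1}$ (rather than a mere $o(t^m)$ estimate) and hence to use Taylor's formula with remainder; once that is in place, the choice of $v$, the restriction to a line, and the failure of $t^m\ln|t|$ to be $C^m$ are all routine.
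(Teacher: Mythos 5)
Your proof is correct. Note that the paper itself gives no proof of this lemma at all --- it is stated in the middle of the proof of Theorem~\ref{theo:pres_trans_B} and immediately used, with the justification that it is ``elementary'' (the closest antecedent being the $2n=4$ focus-focus case in~\cite{SVN-semiglobalinvariants-2003}) --- so there is no in-paper argument to compare against. Your argument fills that gap cleanly: the restriction to a line in a direction $v$ with $P_m(v)\neq 0$, the use of Taylor's formula with a genuine smooth remainder factor $t^{m+1}\eta(t)$ so that $t^{m+1}\eta(t)\ln|t|$ is $C^m$, and the observation that $\frac{d^m}{dt^m}\bigl(t^m\ln|t|\bigr)=m!\,\ln|t|+c$ is unbounded, together isolate the obstruction correctly and yield the contradiction.
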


With this elementary lemma, we have that $(1 - \partial_1  B_1) \circ \alpha $ and $\partial_2  B_1 \circ \alpha$ are flat for all $(0,0,\bI)$, where $\alpha(c_1,c_2,\bI) = (c_1 \delta_1 + c_2 \delta_2, c_1 \delta_2 - c_2 \delta_1 , \bI)$. The function $\alpha$ is a linear function, and it is invertible since $\delta \neq 0$. This gives us the flatness of $1- \partial_1  B_1$ and $\partial_2  B_1$, as functions of $c_1$ and $c_2$ for all the $(0,0,\bI)$, and thus, as functions of all the $n$ variables for all the $(0,0,\bI)$. We have thus $u \in \flat_S (\cU)$ with $S = \{ (0,0,\bI) | \bI \in \RR^{\kx} \}$. 


Supposing that $\psi$ exchanges stable and unstable manifolds yields the same demonstration \emph{mutatis mutandis}, that is, in the last part of the proof, we look at the first component of the flow on $(\bar{c},a,\Btheta,\Bxi)$ with $a$, $\Btheta$ and $\Bxi$ understood as parameters.
\end{proof}

In Theorem~\ref{theo:pres_semi-toric_G}, we associate to a symplectomorphism that preserves a semi-toric foliation a unique $G$ of the form $( \spadesuit )$. It would be interesting to have more knowledge about the restrictions on such symplectomorphism, but here we want to describe the moment map. We'd like to know for instance to what extent $G$ is unique in Theorem~\ref{theo:pres_semi-toric_G}.

Theorem~\ref{theo:pres_trans_B} can be applied to answer this question. If we think of the $G$'s as ``local models'' or ``charts'' of the image of the moment map, then Theorem~\ref{theo:pres_trans_B} describes the ``transition functions'' $B$. Indeed, if we have two local models given by Theorem~\ref{theo:pres_semi-toric_G}, on $\cU \subseteq M$, a neighborhood of a point $m$ of Williamson type $\Bbbk$, one has:

\[ \begin{cases}
   F \circ \varphi = G \circ Q_\Bbbk, \\
   F \circ {\varphi'} = G' \circ Q_\Bbbk .
   \end{cases}
\]

Then we get: 

\[ Q_\Bbbk \circ \underbrace{( \varphi^{-1} \circ \varphi' )}_{= \psi} = \underbrace{( G^{-1} \circ G' )}_{ = B } \circ Q_\Bbbk .\]

We can apply Theorem~\ref{theo:pres_trans_B} to the pair $( \psi =  \varphi^{-1} \circ \varphi' , B = G^{-1} \circ G' )$ and then get the relation:
  
\begin{equation} \label{equ:rel_unicity_semi-toric}
G' = \left(\epsilon^{\uf}_1 G_1 + u , \epsilon^{\uf}_2 G_2 ,\epsilon^\ue_1 G_3,\ldots,\epsilon^\ue_{k_{\ue }} G_{k_{\ue }+2}, ( X^{\uf} | X^\ue | X^{\ux} ) \circ \check{G}  \right) , \  u \in \flat_S ( U ).
\end{equation}

\subsubsection{Symplectic invariants for the transition functions}

We have given some restrictions on the transition functions $B$ between two local models of a semi-toric system. If we now authorize ourselves to change the symplectomorphism in the local models, we can have a ``nicer'' $G$. This amounts to determine what in $B$ is a semi-local symplectic invariant of the system. Let us set 

\[ E_1 = \frac{1}{2} \begin{pmatrix} 1 + \epsilon^{\uf}_1 & 1 - \epsilon^{\uf}_1 & 0 & 0 \\ -1 + \epsilon^{\uf}_1 & 1 + \epsilon^{\uf}_1 & 0 & 0 \\ 0 & 0 & 1 + \epsilon^{\uf}_1 & 1 - \epsilon^{\uf}_1 \\ 0 & 0 & -1 + \epsilon^{\uf}_1 & 1 + \epsilon^{\uf}_1 \end{pmatrix} 
\]
\[
E_2 = \frac{1}{2} \begin{pmatrix} 1 + \epsilon^{\uf}_2 & 0 & 1 - \epsilon^{\uf}_2 & 0 \\ 0 & 1 + \epsilon^{\uf}_2 & 0 & 1 - \epsilon^{\uf}_2 \\ 1 - \epsilon^{\uf}_2 & 0 & 1 + \epsilon^{\uf}_2 & 0 \\ 0 & 1 - \epsilon^{\uf}_2 & 0 & 1 + \epsilon^{\uf}_2 \end{pmatrix} . \]

\begin{theorem} \label{theo:sympl_inv_semi-toric}
 
Let $\psi$ be a symplectomorphism of $L_\Bbbk$ preserving $Q_\Bbbk$ and $B$ one of the possible associated diffeomorphisms of $\RR^n$ introduced in Theorem~\ref{theo:pres_trans_B}, of the form~$( \bigstar )$. Consider the diffeomorphisms 
\[ \begin{aligned}
   \zeta_B (z_1,z_2,\Bx^\ue,\Bxi^\ue,\Btheta,\bI) & = \\ 
   ( e^{ -i \langle \Btheta , X^{\uf} \rangle } z_1,e^{ - i \langle \Btheta , X^{\uf} \rangle} z_2 & ,e^{ -i \Btheta \cdot (X^\ue)^t} \bullet \Bz^\ue, \Btheta, \bI + Q^\ue _\Bbbk \cdot (X^\ue)^t + q_2 \cdot (X^{\uf})^t )  \;
   \end{aligned}
\]
and 
\[ \eta_B (x_1,\xi_1,x_2,\xi_2,\Bx^\ue,\Bxi^\ue,\Btheta,\bI) = ((x_1,\xi_1,x_2,\xi_2) E^t_B,\Bz^\ue,\Btheta \cdot (X^{\ux})^{-1},\bI \cdot (X^{\ux})^t) \]
where $ E_B = E_1 E_2$.

Then we have that $\zeta_B$ and $\eta_B$ are symplectomorphisms of $L_\Bbbk$ which preserve the foliation $\cQ_\Bbbk$ and

\begin{equation} \label{equ:sympl_inv_semi-toric}
 B \circ Q_\Bbbk  = ( \psi \circ \zeta_B \circ \eta_B )^* \left( ( 1,1, \Beps^\ue_1 ,\ldots, \Beps^\ue_{k_\ue},1,\ldots,1 ) \bullet Q_\Bbbk  + (u \circ Q_\Bbbk ,0,\ldots,0) \right)
\end{equation}
with $u$ as introduced in Theorem~\ref{theo:pres_trans_B}.
\end{theorem}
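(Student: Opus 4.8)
The plan is to establish, in turn, that $\zeta_B$ and $\eta_B$ are symplectomorphisms, that they preserve the foliation $\cQ_\Bbbk$, and finally the factorisation~\eqref{equ:sympl_inv_semi-toric}. The guiding principle is that $\zeta_B$ and $\eta_B$ are explicit ``coboundaries'' that remove the parts of $B$ which are \emph{not} symplectic invariants: $\eta_B$ realises the sign flips $\epsilon^\uf_1,\epsilon^\uf_2$ and the integral change of basis $X^\ux$ of the transverse torus, while $\zeta_B$ absorbs the shear couplings $X^\uf,X^\ue$ of the transverse block of $B$ to the variables $q_2$ and $Q^\ue_\Bbbk$. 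What cannot be absorbed is $\Beps^\ue$ — there is no symplectomorphism of $(\RR^2,\omega_0)$ carrying $q^\ue=x^2+\xi^2$ to $-q^\ue$ — and $u$ modulo $\flat_S$, which is exactly why these are the genuine invariants.

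First I would treat $\zeta_B$. On the $z$- and $Q^\ue_\Bbbk$-variables it is the composition of the time $-\langle\Btheta,X^\uf\rangle$ flow of $q_2$ with the time $-(\Btheta\cdot(X^\ue)^t)_j$ flows of the $q^\ue_j$; it fixes $\Btheta$; and it shifts $\bI$ by $q_2\cdot(X^\uf)^t+Q^\ue_\Bbbk\cdot(X^\ue)^t$. I would then invoke the standard fact that such a twist — a fibrewise composition of Hamiltonian flows whose times depend linearly on the base angles $\Btheta$, corrected by a shift of $\bI$ by the associated moment map — is a symplectomorphism for $\omega=\sum_i d\theta_i\wedge dI_i+\omega_1$: the $\Btheta$-dependence of the fibrewise rotation produces cross terms $\sum_i d\theta_i\wedge(\cdots)$ in $\zeta_B^*\omega$ that are precisely cancelled by the shift of $\bI$, while $\omega_1$ is unchanged because each flow used is symplectic and preserves $\bar z_1 z_2$ and the $z^\ue_j$-moduli. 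Preservation of $\cQ_\Bbbk$ is then immediate, since those same fibrewise rotations fix $q_1,q_2$ (rotating $z_1,z_2$ by a common phase fixes $\bar z_1 z_2$) and every $q^\ue_j=|z^\ue_j|^2$, and the $\bI$-shift depends only on $(q_2,Q^\ue_\Bbbk)$, so every fibre of $Q_\Bbbk$ is preserved.

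Next, $\eta_B$ factors over disjoint blocks of coordinates, so I would check each. On the focus–focus $\RR^4$ it is the linear map $E_B=E_1E_2$; a short matrix computation gives $E_1,E_2\in\Sp(4,\RR)$ ($E_1$ is the simultaneous quarter-turn in the two planes, which exchanges the stable and unstable manifolds of $q_1$, when $\epsilon^\uf_1=-1$; $E_2$ is the exchange of the two symplectic planes when $\epsilon^\uf_2=-1$) together with $q_1\circ E_B=\epsilon^\uf_1 q_1$, $q_2\circ E_B=\epsilon^\uf_2 q_2$. On the $q_1$- and $q^\ue$-variables it is the identity. On $T^*\TT^\kx$ it is the cotangent lift of the torus automorphism $\Btheta\mapsto\Btheta(X^\ux)^{-1}$, well defined on $\TT^\kx$ precisely because $X^\ux\in\GL_\kx(\ZZ)$, and symplectic as all cotangent lifts are. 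Each factor sends $Q_\Bbbk$ to $Q_\Bbbk$ composed with a fixed linear relabelling, hence preserves $\cQ_\Bbbk$, and so does $\eta_B$.

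Finally the identity. By Theorem~\ref{theo:pres_trans_B} applied to $\psi$, $B$ has the form~$(\bigstar)$ and $Q_\Bbbk\circ\psi=B\circ Q_\Bbbk$; since the fibres of $Q_\Bbbk$ are connected, Lemma~\ref{lemma:exist_G} produces diffeomorphisms $\beta_\zeta,\beta_\eta$ of $(\RR^n,0)$ with $Q_\Bbbk\circ\zeta_B=\beta_\zeta\circ Q_\Bbbk$ and $Q_\Bbbk\circ\eta_B=\beta_\eta\circ Q_\Bbbk$, which the explicit formulas above make completely explicit ($\beta_\zeta$ fixes $(q_1,q_2,Q^\ue_\Bbbk)$ and shears $\bI$ by a linear function of $(q_2,Q^\ue_\Bbbk)$ built from $X^\uf,X^\ue,X^\ux$, while $\beta_\eta$ sends $(q_1,q_2,Q^\ue_\Bbbk,\bI)$ to $(\epsilon^\uf_1 q_1,\epsilon^\uf_2 q_2,Q^\ue_\Bbbk,\bI\cdot(X^\ux)^t)$). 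Then $Q_\Bbbk\circ(\psi\circ\zeta_B\circ\eta_B)=B\circ\beta_\zeta\circ\beta_\eta\circ Q_\Bbbk$, and a direct computation — using $(\epsilon^\uf_1)^2=(\epsilon^\uf_2)^2=1$ together with the mutual cancellation of $X^\ux$, and of $X^\uf,X^\ue$, between $\beta_\eta$, the shear of $\beta_\zeta$, and the transverse block of $B$ — collapses $B\circ\beta_\zeta\circ\beta_\eta$ to the map with components $q_1+u'$, $q_2$, $\Beps^\ue\bullet Q^\ue_\Bbbk$ and $\bI$, for some $u'\in\flat_S(U)$; here one uses that $\beta_\zeta,\beta_\eta$ preserve $V_\Bbbk(\cU)$, so precomposing a function flat along $S$ with them keeps it flat along $S$. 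Renaming $u'$ as $u$ and using $B\circ Q_\Bbbk=Q_\Bbbk\circ\psi$, this is~\eqref{equ:sympl_inv_semi-toric}. The only genuinely analytic step is the symplecticity of the twist $\zeta_B$, where one must verify that the compensating $\bI$-shift is exactly the moment map of the fibrewise rotation so that all cross terms in $\zeta_B^*\omega$ vanish; the remaining work is careful bookkeeping of transposes, so that the integral matrices $X^\uf,X^\ue,X^\ux$ cancel the transverse block of $B$ rather than compounding it.
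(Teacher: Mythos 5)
Your proof follows the same route as the paper's: both establish the theorem by directly verifying that $\zeta_B$ and $\eta_B$ preserve $\omega$ and act on the components of $Q_\Bbbk$ only by the stated sign flips and shears, and both leave the final assembly of \eqref{equ:sympl_inv_semi-toric} as bookkeeping. Your packaging of the symplecticity of $\zeta_B$ as a moment-map-compensated fibrewise twist, and of the $T^*\TT^{\kx}$ block of $\eta_B$ as the cotangent lift of $\Btheta\mapsto\Btheta\cdot(X^{\ux})^{-1}$, is a cleaner way of saying what the paper checks by expanding $\zeta_B^*\omega$ term by term; the content of those steps is identical and correct.

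The one place you go beyond the paper is also the one place where your argument has a concrete gap. With $\beta_\zeta(q_1,q_2,\Bq^\ue,\bI)=(q_1,q_2,\Bq^\ue,\bI+\Bq^\ue (X^\ue)^t+q_2(X^{\uf})^t)$ and $\beta_\eta(q_1,q_2,\Bq^\ue,\bI)=(\epsilon^{\uf}_1q_1,\epsilon^{\uf}_2q_2,\Bq^\ue,\bI\cdot(X^{\ux})^t)$ as you (correctly) read them off, the composition $B\circ\beta_\zeta\circ\beta_\eta$ does \emph{not} collapse: its transverse block is $\epsilon^{\uf}_2q_2(X^{\uf})^t+\Bq^\ue(X^\ue)^t+\bigl(\bI(X^{\ux})^t+\Bq^\ue(X^\ue)^t+\epsilon^{\uf}_2q_2(X^{\uf})^t\bigr)(X^{\ux})^t$, so $X^{\ux}$ appears squared and $X^{\uf},X^\ue$ compound instead of cancelling. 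For the cancellation you assert, $\beta_\zeta\circ\beta_\eta$ must realize the \emph{inverse} of the transverse block of $B$, which forces either composing $\psi$ with $\zeta_B^{-1}\circ\eta_B^{-1}$ (equivalently, putting $((X^{\ux})^t)^{-1}$ and opposite signs into the induced base maps) or pinning down whether the defining relation is $Q_\Bbbk\circ\psi=B\circ Q_\Bbbk$ or $Q_\Bbbk\circ\psi^{-1}=B\circ Q_\Bbbk$. To be fair, this sign and direction ambiguity is already present in the theorem's statement, and the paper's own proof stops before this step; but since you state the cancellation pattern explicitly, you should carry out the composition once in the consistent convention rather than asserting that the integer blocks "mutually cancel".
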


The symplectomorphisms $\zeta_B$ and $\eta_B$ are admissible modifications of semi-toric local models. If we have two local models $(\varphi,G)$ and $(\varphi',G')$, Theorem~\ref{theo:sympl_inv_semi-toric} tells us we can always modify the symplectomorphism of one of them, for instance $\varphi'$, to get another local model $(\tilde{\varphi}',\tilde{G'})$ such that: 

\[ ( \tilde{G'}^{-1} \circ G ) \circ Q_\Bbbk = \left( ( 1,1, \Beps^\ue_1 ,\ldots, \Beps^\ue_{k_\ue},1,\ldots,1 ) \bullet Q_\Bbbk  + (u \circ Q_\Bbbk ,0,\ldots,0) \right) .\]

\begin{proof}{of Theorem}~\ref{theo:sympl_inv_semi-toric}

First let's prove equation~\ref{equ:sympl_inv_semi-toric}. 

\[ \zeta_B^* (q_1 + i q_2) = \zeta_B^* ( \bar{z_1} z_2 ) = e^{i \Btheta \cdot X^{\uf}} \bar{z_1} e^{-i \Btheta \cdot X^{\uf}} z_2 = \bar{z_1} z_2 = q_1 + i q_2 \]

so $q_1, q_2$ are preserved.

\[ \zeta_B^* Q^{\ue }_\Bbbk = ( |e^{-i\Btheta \cdot (X^{\ue }_{1,.})^t} \cdot z^{\ue }_1|^2,\ldots,|e^{-i\Btheta \cdot (X^{\ue }_{k_{\ue },.})^t} \cdot z^{\ue }_{k_{\ue }}|^2 ) = ( |z^{\ue }_1|^2,\ldots,|z^{\ue }_{k_{\ue }}|^2 ) = Q^{\ue }_\Bbbk \]

\[ \zeta_B^* \bI = q_2 \cdot X^{\uf} + Q^{\ue }_\Bbbk \cdot (X^{\ue })^t + \bI . \]

For $\eta$, we have: $\eta^* Q_\Bbbk = (E_1E_2)^* Q^{\uf}_\Bbbk + (X^{\ux})^* Q^{\ux}_\Bbbk$ , so we can treat each action separately. We can also treat $E_1$ and $E_2$ separately, as the two matrices commutes, and treat only the case when  $\epsilon^{\uf}_1$ (respectively $ \epsilon^{\uf}_2$) is equal to $-1$, for when $\epsilon^{\uf}_i = +1$, $E_i =id$.

\begin{itemize}
\item[$\bullet$]  $\epsilon^{\uf}_1= -1$:

\[ E_1 = \begin{pmatrix}0 & 1 & 0 & 0  \\ -1 & 0 & 0 & 0 \\ 0 & 0 & 0 & 1 \\ 0 & 0 & -1 & 0 \end{pmatrix} \text{ so } E_1 \cdot \begin{pmatrix} x_1 \\ \xi_1 \\ x_2 \\ \xi_2 \end{pmatrix} = \begin{pmatrix} \xi_1 \\ -x_1 \\ \xi_2 \\ -x_2 \end{pmatrix} = \begin{pmatrix} \hat{x}_1 \\ \hat{\xi}_1 \\ \hat{x}_2 \\ \hat{\xi}_2 \\ \end{pmatrix} \]
\[ E_1^* q_1 = \hat{x}_1 \hat{\xi}_1 + \hat{x}_2 \hat{\xi}_2 = - \xi_1 x_1 - \xi_2 x_2 = - q_1= \epsilon^{\uf}_1 q_1 ,\]
\[ E_1^* q_2 = \hat{x}_1 \hat{\xi}_2 - \hat{x}_2 \hat{\xi}_1 = \xi_1 (-x_2) - \xi_2 (-x_1) = q_2. \]

\item[$\bullet$]  $\epsilon^{\uf}_2= -1$:

\[ E_2 = \begin{pmatrix} 0 & 0 & 1 & 0 \\ 0 & 0 & 0 & 1 \\ 1 & 0 & 0 & 0 \\ 0 & 1 & 0 & 0 \end{pmatrix}  \]

\[ E_2^* q_1 = \tilde{x}_1 \tilde{\xi}_1 + \tilde{x}_2 \tilde{\xi}_2 = x_2 \xi_2 + x_1 \xi_1 = q_1 \]
\[ E_2^* q_2 = \tilde{x}_1 \tilde{\xi}_2 - \tilde{x}_2 \tilde{\xi}_1 = x_2 \xi_1 - x_1 \xi_2 = - q_2 = \epsilon^{\uf}_2 q_2 . \]

\end{itemize}

And $(X^{\ux})^* Q^\ux_\Bbbk = \bI \cdot(X^{\ux})^t$.

What is left is to prove the preservation of $\omega$. For $\zeta$ we have:

\[ \zeta_{X^{\uf},X^\ue}^* (\omega) = \zeta^* \omega^\uf_\Bbbk + \zeta^* \omega^\ue_\Bbbk + \zeta^* \omega^\ux_\Bbbk. \]

\[
\begin{aligned}
\zeta^* \omega^\uf_\Bbbk & = \re \left[ ( e^{-i \Btheta \cdot X^{\uf}} dz_1 - i z_1 d\Btheta \cdot X^{\uf} e^{-i \Btheta \cdot X^{\uf}} ) \wedge ( e^{i \Btheta \cdot X^{\uf}} d\bar{z_2} + i \bar{z_2} d\Btheta \cdot X^{\uf} e^{i \Btheta \cdot X^{\uf}} ) \right] \\
\ &  = \re \left[ dz_1 \wedge d\bar{z_2} - d\Btheta \cdot X^{\uf} \wedge i( z_1 d\bar{z_2} + z_2 d\bar{z_1} ) \right] \\
\ &  = \omega^\uf_\Bbbk - d\left[ \ \Btheta \cdot X^{\uf} \ \right] \wedge dq_2 
\end{aligned}
\]

\[ \begin{aligned}     
\zeta^* \omega^\ue_\Bbbk & = \sum_{j=1}^{k_{\ue }}  \im \left[ ( e^{-i\Btheta \cdot (X^\ue_{j,.})^t}  dz^\ue_j - i e^{-i\Btheta \cdot (X^\ue_{j,.})^t} z^\ue_j d\Btheta \cdot (X^\ue_{j,.})^t ) \right. \\
\ & \hspace{40mm} \left. \wedge ( e^{i\Btheta \cdot (X^\ue_{j,.})^t} d\bar{z}^\ue_j + i e^{i\Btheta \cdot (X^\ue_{j,.})^t} \bar{z}^\ue_j d\Btheta \cdot (X^\ue_{j,.})^t ) \right] \\
\ & = \sum_{j=1}^{k_{\ue }}  \im \left[ dz^\ue_j \wedge d\bar{z}^\ue_j - d\Btheta \cdot (X^\ue_{j,.})^t \wedge i( z^\ue_j d\bar{z^\ue_j} + \bar{z^\ue_j} dz^\ue_j ) \right] \\
\ & = \omega^\ue_{\Bbbk} - \sum_{j=1}^{k_{\ue }} d\Btheta \cdot (X^\ue_{j,.})^t \wedge dq^\ue_j
\end{aligned}
\]

\[ \begin{aligned}     
\zeta^* \omega^\ux_\Bbbk & = d\Btheta \wedge d(\bI + Q^{\ue }_\Bbbk \cdot (X^\ue)^t + q_2 (X^{\uf})^t) \\
(\text{ with formula }\ref{equ:wedge_bold}) \;  & = \omega^\ux_\Bbbk + d [ \Btheta \cdot X^\uf ] \wedge dq_2 + \sum_{j=1}^{k_\ue } d\Btheta \cdot (X^\ue_{j,.})^t \wedge dq^\ue_j
\end{aligned}
\]

So when we sum $\zeta^* \omega^f_\Bbbk$, $\zeta^* \omega^\ue_\Bbbk$ and $\zeta^* \omega^x_\Bbbk$, we get that $\zeta^* \omega = \omega$.

Now for $\eta_B$, we can again treat separately the action on the different types $E_i$'s, and just treat the case when the $\epsilon$'s are $= -1$. We have that $E_{1,2}^* \omega_\Bbbk = E_{1,2}^* \omega^f_\Bbbk + \omega^\ue_\Bbbk + \omega^x_\Bbbk $ and:

\[\begin{aligned}
E_1^* \omega^f_\Bbbk & = d\hat{x}_1 \wedge d\hat{\xi}_1 + d\hat{x}_2 \wedge d\hat{\xi}_2 + d\hat{\theta}_3 \wedge d\hat{\xi}_3 \\ 
\ & = d\xi_1 \wedge d(-x_1) + d\xi_2 \wedge d(-x_2) + d\theta_3 \wedge d\xi_3 = \omega^f_\Bbbk
\end{aligned}
\]
\[\begin{aligned}
E_2^* \omega^f_\Bbbk & = d\tilde{x}_1 \wedge d\tilde{\xi}_1 + d\tilde{x}_2 \wedge d\tilde{\xi}_2 + d\tilde{\theta}_3 \wedge d\tilde{\xi}_3 \\
\ & = dx_2 \wedge d\xi_2 + dx_1 \wedge d\xi_1 + d\theta_3 \wedge d\xi_3 = \omega^f_\Bbbk .
\end{aligned}
\]  
 
Lastly, the transformation $(\Btheta,\bI) \mapsto (\Btheta \cdot (X^{\ux})^{-1},\bI \cdot(X^{\ux})^t)$ is a linear symplectomorphism with respect to the symplectic form $\omega^x_\Bbbk = \sum_{j=1}^{\kx} d\theta_j \wedge dI_j = d\Btheta \wedge d\bI$.
\end{proof}

This theorem means that \emph{the only symplectic invariants of the local model of a semi-toric critical value are the orientations of the half-spaces given by its elliptic components and its jet in $q_1,q_2$.} Conservation of plans orientations is only natural: symplectic structure is exactly the algebraic area on specific plans. Conservation of the jet of the focus-focus value, is a specificity of semi-toric systems. It is related to the Taylor expansion of action coordinates near a semi-toric critical value (see~\cite{Wacheux-AsymptoticsofActionnearST-2014}).


\section{ Image of moment map for a semi-toric integrable system }

 Now that we have gathered enough results concerning local models, we can prove the principal result of this paper by exhibitng a local-to-global principle. 

\begin{proof}{ of Theorem}~\ref{theo:loc_FF_values}

{\bf Local proof of 1. , 2. and 3.:}

Let $p$ be a critical point of Williamson type $\Bbbk$ with $\kf =1$ of a semi-toric integrable system $(M,\omega,F)$. Applying Theorem~\ref{theo:pres_semi-toric_G}, with the correct system of local coordinates $\varphi$ in a neighborhood $\cU$ of $p$, we have a smooth function $G$ and a matrix $A \in GL_{n-1}(\ZZ)$ such that  $F \circ \varphi =(G_1(Q_\Bbbk),A \circ \check{Q_\Bbbk})$. So, the surface $\Gamma_{\cU}:= V_\Bbbk (\cU)$ is parametrized as follows: let $\Bt$ be here the values of $\bI$. With Theorem~\ref{theo:strat_Williamson_M}, we know that $\Bt \in \tilde{D} \subseteq \RR^{\kx}$. We define now $h \smooth (\tilde{D} \to \RR)$ as $\tilde{h}(\Bt):= G_1(0,0,\underbrace{0,\ldots,0}_{k_\ue},\Bt)$. With notations of Theorem~\ref{theo:pres_semi-toric_G}, we set 

\[ \Gamma_{\cU} := V_\Bbbk (\cU) =  \left\{ \tilde{H} (\Bt) = (\tilde{h}(\Bt),\underbrace{F^\ux \circ \Bt, E^\ux \circ \Bt, X^\ux \circ \Bt}_{=: S}) | \Bt \in D \right\} . \]

On $P_\Bbbk(\cU)$, $F$ is of rank $\kx$ by definition, and in Theorem~\ref{theo:pres_semi-toric_G}, first column of $Jac(G)$ is $(\partial_{q_1}G_1,0,..,0)^T$, so there exists a linear map $S$ of $\RR^n$ such that $S \circ \tilde{\Bt} (\Bt) = (S_1 \circ \tilde{h}(\Bt),\underbrace{0,..,0}_{k_e+1},\Bt))$. 

This proves points $1.$,$2.$ and $3.$ locally: $\cP(\Gamma_\cU)$ is the affine space generated by $Im(h)$ and $Im(T)$, and with $h:= S_1 \circ \tilde{h}$ on $\cU$, we have that $\Gamma_\cU$ is the graph of $h$ on $D$. Note that since $\partial_{q_1}G_1 (0,..,0,\Bt) \neq 0$, $Im(h)$ is not in $Im(T)$.

{\bf Global proof of 1. , 2. and 3.:}

If we call $\cU_p$ the open set given for a point $p \in M$ by Theorem~\ref{theo:A-L_with_sing}, the family $\{\cU_p\}_{p \in M}$ is an open covering of $M$, so we can extract a finite one of it. If we now fix $\Bbbk$, each open set $\cU^\Bbbk_i$ gives a surface $\Gamma_i$, in $V_\Bbbk(M)$ there is at most a finite number $m_\Bbbk$ of surfaces: $V_\Bbbk(M)  = \bigcup_{i=1}^{m_\Bbbk} V_\Bbbk(\cU^\Bbbk_i) = \bigcup_{i=1}^{m_\Bbbk} \Gamma_i$. We want to show that the covering $\left\{ \cU^\Bbbk_i \right\}_{i=1..m_\Bbbk}$ of $P_\Bbbk(M)$ can be optimized in the sense that we can take a $\cU^\Bbbk_i$ containing a connected component of $P_\Bbbk(M)$. A consequence is that $m_\Bbbk$ is then minimal.

Two open sets $\cU_i$ and $\cU_j$ may intersectect. In this case, with Theorems~\ref{theo:pres_trans_B} and~\ref{theo:sympl_inv_semi-toric} we can always modify one of the two local models so that we have a local model on the union $\cU_i \cup \cU_j$ that is a natural extension of each local model. We will see that it extends also results of Theorem~\ref{theo:loc_FF_values}. 

On each open set we can apply the results proved before, and get a $h_i$ and $\cP(\Gamma_i)$ for each $\Gamma_i$. Since the change of local model between $\cU_i$ and $\cU_j$ is the identity on $P_\Bbbk(\cU_i \cap \cU_j)$, we have first that $\cP(\Gamma_i) = \cP(\Gamma_j)$. We can also set a function $h$ on the union as following 

\[ h_{\cU_i \cup \  \cU_j }:= \begin{cases}
                       h_i \text{ on } \cU_i \\
                       h_j \text{ on } \cU_j
                      \end{cases}. 
\]

It is consistent because of Theorems~\ref{theo:pres_trans_B} and~\ref{theo:sympl_inv_semi-toric}. From this, we can extend step by step Theorem~\ref{theo:loc_FF_values} from a $\cU^\Bbbk_i$ containing a $p$ to the reunion of all $\cU^\Bbbk_j$ path connected to $p$. It is an open set, the finite union of $\cU^\Bbbk_i$ that contains the connected component of $P_\Bbbk (M)$ containing $p$. This family of open sets is finite and disjoint. Thus, $P_\Bbbk (M)$ has a finite number of connected components which are strongly separated.

This proves $1.$, $2.$ and $3.$ globally. 

{\bf Proof of Item 4.:}

If we suppose that the fibers are connected, then for a critical value $v$, it now makes sense to talk of its Williamson index $\Bbbk$. Taking $p \in F^{-1}(v)$, there is a unique connected component of $P_\Bbbk (M)$ that contains $p$, and hence a unique $\Gamma_i$ in $V_\Bbbk(M)$ that contains $v$. The connected components of $P_\Bbbk (M)$ being strongly isolated, we have that $\Gamma_i$ is strongly isolated as well.

\end{proof}

\section{Conclusion} 

In this article, we have presented local techniques for the investigation of semi-toric and almost-toric systems. In the description of the image of the moment maps, one shall think about the $Q_\Bbbk$ as ``singular local coordinates'' for the image of the moment map. We have showed how these singular local coordinates can be used to describe ``singular manifolds'', by analogy with local coordinates and manifolds. 

In the case $2n = 6$, Theorem~\ref{theo:loc_FF_values} is very visual: $FF-X$ critical values are a union of nodal paths; each nodal path is contained in a plane. All these planes share a common direction, and a nodal path is the embedding of the graph of a smooth function from $\RR$ to $\RR$.

As calculus in local coordinates for regular manifolds, the calculus in local singular coordinates is a very efficient techniques to provide results even in unfriendly settings. For instance, although we don't know the precise form it may take, there must be an extension of Theorem~\ref{theo:loc_FF_values} to almost-toric systems of any complexity, or to non-compact manifolds.

Local models are one technique, it is not the only one. In upcoming articles, we shall give another approach for the description of the $V_\Bbbk$, using more general arguments like Atiyah - Guillemin \& Steinberg theorem. We shall rely on it to prove that the fibers of semi-toric systems are connected.


\section*{Bibliography}



\begin{thebibliography}{PRVN11}

\bibitem[Ati82]{Atiyah-ConvexityandCommuting-1982}
M.~F. Atiyah.
\newblock Convexity and commuting hamiltonians.
\newblock {\em Bulletin of the London Mathematical Society}, 14(1):1--15, 1982.

\bibitem[BF04]{BolsinovFomenko-book}
A.~V. Bolsinov and A.~T. Fomenko.
\newblock {\em Integrable {H}amiltonian systems; Geometry, topology,
  classification}.
\newblock Chapman \& Hall, 2004.
\newblock Translated from the 1999 Russian original.

\bibitem[Cha86]{Chaperon-GeoDiff-SingSysDyn-Asterisque-1986}
M.~Chaperon.
\newblock {\em G{\'e}om{\'e}trie diff{\'e}rentielle et singularit{\'e}s de
  syst{\`e}mes dynamiques}.
\newblock Number 138-139. 1986.

\bibitem[Cha12]{Chaperon-SmoothFFaSimpleProof-2012}
M.~Chaperon.
\newblock Normalisation of the smooth focus-focus: a simple proof. with an
  appendix by jiang kai.
\newblock {\em Acta Mathematica Vietnamica}, page~8, 2012.

\bibitem[Del88]{Delzant-Hamiltoniensperiodiqueset-1988}
T.~Delzant.
\newblock {Hamiltoniens p\'eriodiques et images convexes de l'application
  moment.}
\newblock {\em Bull. Soc. Math. Fr.}, 116(3):315--339, 1988.

\bibitem[Del90]{Delzant-ClassificationActions-1990}
T.~Delzant.
\newblock Classification des actions hamiltoniennes compl{\'e}tement
  int{\'e}grables de rang deux.
\newblock {\em Annals of Global Analysis and Geometry}, 8:87--112, 1990.
\newblock 10.1007/BF00055020.

\bibitem[dVV79]{ColinVey-LemmeMorseIsochore-1979}
Y.~Colin de~Verdiere and J.~Vey.
\newblock Le lemme de morse isochore.
\newblock {\em Topology}, 18(4):283 -- 293, 1979.

\bibitem[Eli84]{Eliasson-Thesis-1984}
L.~H. Eliasson.
\newblock {\em Hamiltonian systems with Poisson commuting integrals}.
\newblock PhD thesis, Stockholm, 1984.

\bibitem[Eli90]{Eliasson-NormalformsHamiltonian-1990}
L.~H. Eliasson.
\newblock {Normal forms for Hamiltonian systems with Poisson commuting
  integrals -- elliptic case}.
\newblock {\em Commentarii Mathematici Helvetici}, 65:4--35, 1990.

\bibitem[GS82]{GuilleminSternberg-ConvexitypropertiesI-1982}
V.~Guillemin and S.~Sternberg.
\newblock Convexity properties of the moment mapping.
\newblock {\em Inventiones Mathematicae}, 67:491--513, 1982.
\newblock 10.1007/BF01398933.

\bibitem[GS84]{GuilleminSternberg-ConvexitypropertiesII-1984}
V.~Guillemin and S.~Sternberg.
\newblock Convexity properties of the moment mapping. ii.
\newblock {\em Inventiones Mathematicae}, 77:533--546, 1984.
\newblock 10.1007/BF01388837.

\bibitem[KT01]{KarshonTolman-Centeredcomplexity1HamTorusAction-2001}
Y.~Karshon and S.~Tolman.
\newblock Centered complexity one {H}amiltonian torus actions.
\newblock 353(12):4831--4861 (electronic), 2001.

\bibitem[KT03]{KarshonTolman-CompleteinvariantsforHamT_actions_tall-2003}
Y.~Karshon and S.~Tolman.
\newblock Complete invariants for {H}amiltonian torus actions with two
  dimensional quotients.
\newblock {\em J. Symplectic Geom.}, 2(1):25--82, 2003.

\bibitem[KT11]{KarshonTolman-ClassificationofHamiltonian-2011}
Y.~Karshon and S.~Tolman.
\newblock {Classification of Hamiltonian torus actions with two dimensional
  quotients}.
\newblock {\em ArXiv e-prints}, September 2011.

\bibitem[LS10]{LeungSymington-Almosttoricsymplectic-2010}
N.~C. Leung and M.~Symington.
\newblock Almost toric symplectic four-manifolds.
\newblock {\em J. Symplectic Geom.}, 8(2):143--187, 2010.

\bibitem[MZ04]{MirandaZung-Equivariantnormalform-2004}
E.~Miranda and Nguyen~Tien Zung.
\newblock Equivariant normal form for nondegenerate singular orbits of
  integrable hamiltonian systems.
\newblock {\em Annales Scientifiques de l'{\'E}cole Normale Sup{\'e}rieure},
  37(6):819 -- 839, 2004.

\bibitem[PRVN11]{PelayoRatiuSVN-SymplecticBifTheoryForIntegrableSystems-2011}
A.~Pelayo, T.~S. Ratiu, and S.~V. V{\~u}~Ng\d{o}c.
\newblock {Symplectic bifurcation theory for integrable systems}.
\newblock {\em ArXiv e-prints}, August 2011.

\bibitem[PVN09]{PelayoSVN-Semitoricintegrablesystems-2009}
A.~Pelayo and S.~V{\~u}~Ng\d{o}c.
\newblock Semitoric integrable systems on symplectic 4-manifolds.
\newblock {\em Inventiones Mathematicae}, 177:571--597, 2009.
\newblock 10.1007/s00222-009-0190-x.

\bibitem[PVN11]{PelayoSVN-ConstructingIntSysOfSemitoricType-2011}
A.~Pelayo and S.~V{\~u}~Ng\d{o}c.
\newblock Constructing integrable systems of semitoric type.
\newblock {\em Acta Mathematica}, 206:93--125, 2011.

\bibitem[Sym01]{Symington-4from2-2001}
M.~Symington.
\newblock Four dimensions from two in symplectic topology.
\newblock {\em Proc. Sympos. Pure Math., 71, Amer. Math. Soc., Providence,
  RI.}, Topology and geometry of manifolds (Athens, GA, 2001)(71):153--208,
  2001.

\bibitem[Vey78]{Vey-SurCertainsSystemes-1978}
J.~Vey.
\newblock Sur certains systemes dynamiques separables.
\newblock {\em American Journal of Mathematics}, 100(3):pp. 591--614, 1978.

\bibitem[VN00]{SVN-BohrSommerfeld-2000}
S.~V{\~u}~Ng\d{o}c.
\newblock Bohr-{S}ommerfeld conditions for integrable systems with critical
  manifolds of focus-focus type.
\newblock {\em Comm. Pure Appl. Math.}, 53(2):143--217, 2000.

\bibitem[VN03]{SVN-semiglobalinvariants-2003}
S.~V{\~u}~Ng\d{o}c.
\newblock On semi-global invariants for focus-focus singularities.
\newblock {\em Topology}, 42(2):365--380, 2003.

\bibitem[VN07]{SVN-Momentpolytopessymplectic-2007}
S.~V{\~u}~Ng\d{o}c.
\newblock Moment polytopes for symplectic manifolds with monodromy.
\newblock {\em Advances in Mathematics}, 208(2):909 -- 934, 2007.

\bibitem[VNW13]{SVNWacheux-SmoothNF_for_IHS_near_ff_sing-2013}
S.~V{\~u}~Ng\d{o}c and C.~Wacheux.
\newblock Smooth normal forms for integrable hamiltonian systems near a
  focus-focus singularity.
\newblock {\em Acta Mathematica Vietnamica}, 38(1):107--122, 2013.

\bibitem[Wac15]{Wacheux-AsymptoticsofActionnearST-2014}
C.~Wacheux.
\newblock Asymptotics of action variables near semi-toric singularities.
\newblock {\em Journal of Geometry and Physics}, 2015.
\newblock http://dx.doi.org/10.1016/j.geomphys.2015.07.023.

\bibitem[Wil36]{Williamson-OnAlgPbLinNF-1936}
J.~Williamson.
\newblock On the algebraic problem concerning the normal form of linear
  dynamical systems.
\newblock {\em American Journal of Mathematics}, 58(1):141--163, 1936.

\bibitem[Zun96]{Zung-SymplecticTopology_I-1996}
Nguyen~Tien Zung.
\newblock Symplectic topology of integrable hamiltonian systems. i :
  Arnold-liouville with singularities.
\newblock {\em Compos. Math.}, 101(2):179--215, 1996.

\bibitem[Zun97]{Zung-Anoteonfocusfocus-1997}
Nguyen~Tien Zung.
\newblock A note on focus-focus singularities.
\newblock {\em Differential Geometry and its Applications}, 7(2):123 -- 130,
  1997.

\bibitem[Zun02]{Zung-Anothernotefocus-2002}
Nguyen~Tien Zung.
\newblock Another note on focus-focus singularities.
\newblock {\em Letter in Mathematical Physics}, 60(1):87--99, 2002.

\bibitem[Zun03]{Zung-SymplecticTopology_II-2003}
Nguyen~Tien Zung.
\newblock Symplectic topology of integrable hamiltonian systems, ii:
  Topological classification.
\newblock {\em Compositio Mathematica}, 138(02):125--156, 2003.

\end{thebibliography}

\end{document}